
\documentclass[12pt,fleqn]{article}

\usepackage{amsmath,amssymb,amsthm}
\usepackage[margin=1in]{geometry}
\usepackage[pdfpagemode=UseNone,pdfstartview=FitH,hypertex]{hyperref}

\newcommand{\A}{{\mathcal A}}
\newcommand{\bgset}[1]{\big\{#1\big\}}
\newcommand{\dint}{\ds{\int}}
\newcommand{\ds}[1]{\displaystyle #1}
\newcommand{\hquad}{\hspace{0.08in}}
\newcommand{\ip}[3][]{\left(#2,#3\right)_{#1}}
\newcommand{\M}{{\cal M}}
\newcommand{\N}{\mathbb N}
\newcommand{\norm}[2][]{\left\|#2\right\|_{#1}}
\renewcommand{\O}{\text{O}}
\renewcommand{\o}{\text{o}}
\newcommand{\pnorm}[2][]{\if #1'' \left|#2\right|_p \else \left|#2\right|_{#1} \fi}
\newcommand{\pow}{{(p-2)/p}}
\newcommand{\R}{\mathbb R}
\newcommand{\restr}[2]{\left.#1\right|_{#2}}
\newcommand{\set}[1]{\left\{#1\right\}}
\newcommand{\wop}{{p/(p-2)}}

\newenvironment{enumroman}{\begin{enumerate}

}{\end{enumerate}}

\newtheorem{corollary}{Corollary}[section]
\newtheorem{lemma}[corollary]{Lemma}
\newtheorem{proposition}[corollary]{Proposition}
\newtheorem{theorem}[corollary]{Theorem}

\numberwithin{equation}{section}

\title{\bf A multiplicity result for the scalar field equation\thanks{{\em MSC2010:} Primary 35J61, 35P30, Secondary 35J20
\newline \indent\; {\em Key Words and Phrases:} scalar field equation, multiple nontrivial solutions, variational and minimax methods, concentration compactness, symmetry breaking}}
\author{\bf Kanishka Perera\\
Department of Mathematical Sciences\\
Florida Institute of Technology\\
Melbourne, FL 32901\\
\em kperera@fit.edu}
\date{}

\begin{document}

\maketitle

\begin{abstract}
We prove the existence of $N - 1$ distinct pairs of nontrivial solutions of the scalar field equation in $\R^N$ under a slow decay condition on the potential near infinity, without any symmetry assumptions. Our result gives more solutions than the existing results in the literature when $N \ge 6$. When the ground state is the only positive solution, we also obtain the stronger result that at least $N - 1$ of the first $N$ minimax levels are critical, i.e., we locate our solutions on particular energy levels with variational characterizations. Finally we prove a symmetry breaking result when the potential is radial. To overcome the difficulties arising from the lack of compactness we use the concentration compactness principle of Lions, expressed as a suitable profile decomposition for critical sequences.
\end{abstract}

\section{Introduction}

Consider the eigenvalue problem for the scalar field equation
\begin{equation} \label{1.1}
- \Delta u + V(x)\, u = \lambda\, |u|^{p-2}\, u, \quad u \in H^1(\R^N),
\end{equation}
where $N \ge 2$, $V \in L^\infty(\R^N)$ satisfies
\begin{equation} \label{1.2}
\lim_{|x| \to \infty} V(x) = V^\infty > 0,
\end{equation}
$p \in (2,2^\ast)$, and $2^\ast = 2N/(N-2)$ if $N \ge 3$ and $2^\ast = \infty$ if $N = 2$. Let
\[
I(u) = \int_{\R^N} |u|^p, \quad J(u) = \int_{\R^N} |\nabla u|^2 + V(x)\, u^2, \quad u \in H^1(\R^N).
\]
Then the eigenfunctions of \eqref{1.1} on the manifold
\[
\M = \bgset{u \in H^1(\R^N) : I(u) = 1}
\]
and the corresponding eigenvalues coincide with the critical points and the corresponding critical values of the constrained functional $\restr{J}{\M}$, respectively. Equation \eqref{1.1} has been studied extensively for more than three decades (see Bahri and Lions \cite{MR1450954} for a detailed account). The main difficulty here is the lack of compactness inherent in this problem. This lack of compactness originates from the invariance of $\R^N$ under the action of the noncompact group of translations, and manifests itself in the noncompactness of the Sobolev imbedding $H^1(\R^N) \hookrightarrow L^p(\R^N)$. This in turn implies that the manifold $\M$ is not weakly closed in $H^1(\R^N)$ and that $\restr{J}{\M}$ does not satisfy the usual Palais-Smale compactness condition at all energy levels.

Least energy solutions, also called ground states, are well-understood. In general, the infimum
\[
\lambda_1 := \inf_{u \in \M}\, J(u)
\]
is not attained. For the autonomous problem at infinity,
\begin{equation} \label{1.3}
- \Delta u + V^\infty\, u = \lambda\, |u|^{p-2}\, u, \quad u \in H^1(\R^N),
\end{equation}
the corresponding functional
\[
J^\infty(u) = \int_{\R^N} |\nabla u|^2 + V^\infty\, u^2
\]
attains its infimum
\[
\lambda_1^\infty := \inf_{u \in \M}\, J^\infty(u) > 0
\]
at a radial function $w_1^\infty > 0$ and this minimizer is unique up to translations (see Berestycki and Lions \cite{MR695535} and Kwong \cite{MR969899}). For the nonautonomous problem, we have $\lambda_1 \le \lambda_1^\infty$ by \eqref{1.2} and the translation invariance of $J^\infty$, and $\lambda_1$ is attained if $\lambda_1 < \lambda_1^\infty$ (see Lions \cite{MR778970,MR778974}).

As for higher energy solutions, also called bound states, radial solutions have been extensively studied when the potential $V$ is radially symmetric (see, e.g., Berestycki and Lions \cite{MR695536}, Grillakis \cite{MR1054554}, Bartsch and Willem \cite{MR1237913}, and Conti et al. \cite{MR1773818}). The subspace $H^1_r(\R^N)$ of $H^1(\R^N)$ consisting of radially symmetric functions is compactly imbedded into $L^p(\R^N)$ for $p \in (2,2^\ast)$ by a compactness result of Strauss \cite{MR0454365}, so in this case the restrictions of $J$ and $J^\infty$ to $\M \cap H^1_r(\R^N)$ have increasing and unbounded sequences of critical values given by a standard minimax scheme. Furthermore, Sobolev imbeddings remain compact for subspaces with any sufficiently robust symmetry (see, e.g., Bartsch and Willem \cite{MR1244943}, Bartsch and Wang \cite{MR1349229}, and Devillanova and Solimini \cite{MR2895950}).

As for multiplicity in the nonsymmetric case, Zhu \cite{MR969507}, Hirano \cite{MR1866736}, and Clapp and Weth \cite{MR2103845} have given sufficient conditions for the existence of $2$, $3$, and $N/2 + 1$ pairs of solutions, respectively (see also Li \cite{MR1205151}). Let us also mention that Cerami et al. \cite{MR2138080} have obtained infinitely many solutions under considerably strong additional asymptotic assumptions. There is also an extensive literature on multiple solutions of scalar field equations in topologically nontrivial unbounded domains (see the survey paper of Cerami \cite{MR2278729}). In the present paper we obtain $N - 1$ pairs of solutions in the whole space, without any symmetry assumptions. We assume that
\[
W := V^\infty - V \in L^\wop(\R^N),
\]
and write $\pnorm[q]{\cdot}$ for the norm in $L^q(\R^N)$. Our multiplicity result is the following.

\begin{theorem} \label{Theorem 1.1}
Assume that $N \ge 3$, $V \in L^\infty(\R^N)$ satisfies \eqref{1.2}, $p \in (2,2^\ast)$, and $W \in L^\wop(\R^N)$ satisfies
\begin{equation} \label{1.4}
\pnorm[\wop]{W} < \left(2^\pow - 1\right) \lambda_1^\infty
\end{equation}
and
\begin{equation} \label{1.5}
W(x) \ge c_0\, e^{- a\, |x|} \quad \forall x \in \R^N
\end{equation}
for some constants $0 < a < 2\, \sqrt{V^\infty}$ and $c_0 > 0$. Then the equation \eqref{1.1} has $N - 1$ pairs of eigenfunctions on $\M$.
\end{theorem}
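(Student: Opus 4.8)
The plan is to realize the eigenfunctions as critical points of $\restr{J}{\M}$ produced by an equivariant minimax scheme for the $\mathbb{Z}_2$ action $u\mapsto -u$ (under which both $I$ and $J$ are invariant), and to control the failure of compactness through the profile decomposition. First I would fix the $\mathbb{Z}_2$-cohomological index $i$ (or the Krasnoselskii genus) and set
\[
\lambda_k \;=\; \inf_{i(A)\ge k}\ \sup_{u\in A} J(u),
\]
the infimum running over compact symmetric subsets $A\subset\M$ with $i(A)\ge k$, so that $\lambda_1\le\lambda_2\le\cdots$; the goal is to show that at least $N-1$ of $\lambda_1,\dots,\lambda_N$ are critical values. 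That these numbers are positive, and that $\restr{J}{\M}$ is bounded below, is exactly what \eqref{1.4} buys: since $\int_{\R^N} W u^2\le\pnorm[\wop]{W}$ on $\M$ by H\"older (the conjugate exponent of $\wop$ is $p/2$), one gets $J(u)\ge\lambda_1^\infty-\pnorm[\wop]{W}>\big(2-2^\pow\big)\lambda_1^\infty>0$ for all $u\in\M$.

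The heart of the argument is the compactness analysis. Using the profile decomposition for critical sequences, a sequence $(u_n)\subset\M$ with $J(u_n)\to c$ and $(\restr{J}{\M})'(u_n)\to0$ splits, along a subsequence, as $u_n = u_0+\sum_{j} w_j(\,\cdot\,-y_n^j)+\o(1)$ in $H^1(\R^N)$, where $|y_n^j|\to\infty$, $u_0$ solves the nonautonomous equation and each $w_j\ne0$ solves \eqref{1.3}, both at the common eigenvalue $c$, with $1 = I(u_0)+\sum_j I(w_j)$. Testing each limit equation against its own solution and using $\lambda_1^\infty=\inf_\M J^\infty$ and $\lambda_1=\inf_\M J$, one finds $I(w_j)\ge(\lambda_1^\infty/c)^\wop$ for every bubble and $I(u_0)\ge(\lambda_1/c)^\wop$ when $u_0\ne0$; comparing with $I(u_0)+\sum_j I(w_j)=1$ forces $c\ge\ell^{\,\pow}\lambda_1^\infty$ when there are $\ell$ bubbles and $u_0=0$. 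In particular two or more bubbles cost at least $2^\pow\lambda_1^\infty$, so for every level $c<2^\pow\lambda_1^\infty$ the only possible loss of compactness is a single escaping ground-state bubble (with the remainder $u_0$ captured by weak convergence), and in the purest case $u_0=0$ this occurs exactly at $c=\lambda_1^\infty$. Establishing the decomposition and reading off the threshold $2^\pow\lambda_1^\infty$ together with the single exceptional level $\lambda_1^\infty$ is where I expect the real work to lie.

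Next I would locate the minimax values relative to these thresholds. The lower estimate $\lambda_1<\lambda_1^\infty$ is immediate from \eqref{1.5}: since $W>0$ everywhere, $\int_{\R^N} Wu^2>0$ for $u\ne0$, hence $J<J^\infty$ on $\M$ and $\lambda_1=\inf_\M J<\inf_\M J^\infty=\lambda_1^\infty$. For the upper estimate I would build an odd test family of index $N$ from two well-separated bumps of opposite sign,
\[
\theta\ \longmapsto\ \frac{u_{R,\theta}}{I(u_{R,\theta})^{1/p}}\,,\qquad u_{R,\theta}=w_1^\infty(\,\cdot\,-R\theta)-w_1^\infty(\,\cdot\,+R\theta),\qquad\theta\in S^{N-1}.
\]
This map is odd and injective for large $R$, so its image lies on $\M$ and has index $N$, making it admissible for $\lambda_N$. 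Its autonomous energy does not exceed $2^\pow\lambda_1^\infty$ (the opposite signs only lower $J^\infty$), and in the full energy the competition is between the gain $\int_{\R^N} W u^2\gtrsim e^{-a R}$ coming from \eqref{1.5} and the bump interaction of order $e^{-2\sqrt{V^\infty}\,R}$; because $a<2\sqrt{V^\infty}$ the gain dominates, so $\sup J$ over this family, and therefore $\lambda_N$, is strictly below $2^\pow\lambda_1^\infty$ for a suitable fixed $R$. This is the second place where I would be careful, since it is precisely the slow-decay hypothesis that makes the estimate strict.

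Finally I would feed these facts into an equivariant deformation argument. Because the $N$ values $\lambda_1,\dots,\lambda_N$ all lie in $\big(0,2^\pow\lambda_1^\infty\big)$ and the only obstruction there is the single-bubble escape, whose symmetric escape set carries index $1$, a pseudo-index construction relative to that one unit lets the usual deformation lemma act on the compact complement. It then yields at least $N-1$ critical values of $\restr{J}{\M}$ among $\lambda_1,\dots,\lambda_N$ (distinct critical values giving distinct solutions, and coincidences forcing symmetric critical sets of positive index, hence even more); by the $\mathbb{Z}_2$ symmetry each critical point comes with its negative, producing the asserted $N-1$ pairs of eigenfunctions on $\M$. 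The principal difficulty throughout is the compactness analysis of the second paragraph, together with the index bookkeeping that isolates exactly one unit at the escape level $\lambda_1^\infty$.
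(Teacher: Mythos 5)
Your first three paragraphs track the paper's own argument closely: the same genus-based minimax values, the same profile-decomposition bookkeeping showing that below $2^\pow\,\lambda_1^\infty$ a critical sequence can shed at most one bubble (necessarily a ground state of the problem at infinity), and the same two-bump odd test map on $S^{N-1}$, with the strict bound $\lambda_N < 2^\pow\,\lambda_1^\infty$ coming from the competition between the gain $c\,e^{-aR}$ from \eqref{1.5} and interaction terms of order $\O(e^{-bR})$, $a<b<2\sqrt{V^\infty}$. Up to that point the proposal is sound and essentially identical to the paper.

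The gap is in your final deformation step. The claim that ``the only obstruction is the single-bubble escape, whose symmetric escape set carries index $1$'' is false: when a critical sequence at a level $c\ge\lambda^\#=\big[\lambda_1^\wop+(\lambda_1^\infty)^\wop\big]^\pow$ splits, the weak limit $w^{(1)}$ need not vanish, so the Palais-Smale condition fails not only at $\lambda_1^\infty$ but at \emph{every} level of the form $\big[\tilde{c}^{\,\wop}+(\lambda_1^\infty)^\wop\big]^\pow$ where $\tilde{c}<\lambda_1^\infty$ is a critical value of $\restr{J}{\M}$ with a fixed-sign eigenfunction. These bad levels depend on the a priori unknown critical set below $\lambda_1^\infty$, are not confined to one level, and can coincide with any of the $\lambda_j$; a pseudo-index taken relative to one fixed genus-one set cannot neutralize them, and your parenthetical treatment of coincident levels (via symmetric critical sets of positive index) invokes the same deformation lemma and fails for the same reason. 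Indeed, the conclusion you aim for --- that at least $N-1$ of $\lambda_1,\dots,\lambda_N$ themselves are critical --- is the paper's Theorem \ref{Theorem 1.2}, which needs the \emph{additional} hypotheses that \eqref{1.1} has no positive eigenfunctions with eigenvalues in $(\lambda_1,\lambda_1^\infty)$ and only finitely many eigenfunctions below $2^\pow\,\lambda_1^\infty$; under the hypotheses of Theorem \ref{Theorem 1.1} alone the solutions may sit strictly below $\lambda_1^\infty$ rather than at the minimax levels. The repair is the paper's dichotomy (Lemma \ref{Lemma 2.3} and Proposition \ref{Proposition 2.5}): for each $j\in\set{2,\dots,N}$, either a critical sequence at $\lambda_j$ converges and $\lambda_j$ is an eigenvalue, or the splitting itself hands you the critical point $\pi(w^{(1)})$ at the derived level $\widetilde{\lambda}_j=\big[\lambda_j^\wop-(\lambda_1^\infty)^\wop\big]^\pow<\lambda_1^\infty$; after reducing to $\lambda_2<\cdots<\lambda_N$ via Lemma \ref{Lemma 2.4}, the numbers $\widetilde{\lambda}_2<\cdots<\widetilde{\lambda}_N<\lambda_2<\cdots<\lambda_N$ are pairwise distinct, so one still harvests $N-1$ distinct critical values and hence $N-1$ pairs of eigenfunctions.
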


Our result gives more solutions than \cite{MR2103845} when $N \ge 6$. Moreover, our proof is simpler than that in \cite{MR2103845} and does not involve any dynamical systems theory arguments. Note also that we do not assume that $V$ is a positive function as in \cite{MR2103845}.

We obtain a stronger result when $\lambda_1$ is the only eigenvalue of \eqref{1.1} with a positive eigenfunction on $\M$. Let $\A$ denote the class of all nonempty closed symmetric subsets of $\M$, let
\[
\gamma(A) = \inf\, \bgset{l \ge 1 : \exists \text{ an odd continuous map } A \to \R^l \setminus \{0\}}
\]
be the genus of $A \in \A$, and set
\[
\lambda_j := \inf_{\substack{A \in \A\\[1pt]
\gamma(A) \ge j}}\, \sup_{u \in A}\, J(u), \quad \lambda_j^\infty := \inf_{\substack{A \in \A\\[1pt]
\gamma(A) \ge j}}\, \sup_{u \in A}\, J^\infty(u), \quad j \ge 2.
\]
We have $\lambda_j \le \lambda_j^\infty$ by \eqref{1.2} and the translation invariance of $J^\infty$, and it is known that
\begin{equation} \label{1.6}
\lambda_j^\infty = 2^\pow\, \lambda_1^\infty, \quad j = 2,\dots,N
\end{equation}
(see Perera and Tintarev \cite{PeTi}), so
\begin{equation} \label{1.7}
\lambda_1 \le \cdots \le \lambda_N \le 2^\pow\, \lambda_1^\infty.
\end{equation}
Under the hypotheses of Theorem \ref{Theorem 1.1}, $\lambda_1 < \lambda_1^\infty$ and hence $\lambda_1$ is an eigenvalue of \eqref{1.1}, and it was recently shown in Perera and Tintarev \cite{PeTi} that $\lambda_2$ is also an eigenvalue.

\begin{theorem} \label{Theorem 1.2}
Assume that $N \ge 3$, $V \in L^\infty(\R^N)$ satisfies \eqref{1.2}, $p \in (2,2^\ast)$, and $W \in L^\wop(\R^N)$ satisfies \eqref{1.4} and \eqref{1.5}. If \eqref{1.1} has no positive eigenfunctions on $\M$ corresponding to eigenvalues in $(\lambda_1,\lambda_1^\infty)$, and has only a finite number of eigenfunctions on $\M$ corresponding to eigenvalues in $(\lambda_1,2^\pow\, \lambda_1^\infty)$, then at least $N - 1$ of the minimax levels $\lambda_1,\dots,\lambda_N$ are eigenvalues of \eqref{1.1}.
\end{theorem}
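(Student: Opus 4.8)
The plan is to combine the Lyusternik--Schnirelmann genus theory for the even functional $\restr{J}{\M}$ with the profile decomposition for critical sequences, locating the solutions on the minimax levels $\lambda_1,\dots,\lambda_N$ from the introduction. Throughout I work in the case where \eqref{1.1} has only finitely many eigenfunctions on $\M$ with eigenvalues in $(\lambda_1,2^\pow\,\lambda_1^\infty)$, since the complementary case already contains infinitely many solutions; the finiteness guarantees that the critical values of $\restr{J}{\M}$ in this range are isolated, so that the symmetric deformation lemma applies wherever a Palais--Smale condition is available. The principle I invoke repeatedly is standard: if $\restr{J}{\M}$ satisfies $(PS)_c$ and $c=\lambda_j$, then $\lambda_j$ is an eigenvalue, and if moreover $\lambda_j=\dots=\lambda_{j+m-1}=c$ then the genus of the critical set at level $c$ is at least $m$.

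Next I would pin down where compactness can fail below the threshold $2^\pow\,\lambda_1^\infty$. Applying the profile decomposition to a critical sequence at a level $c<2^\pow\,\lambda_1^\infty$, one writes its weak limit as a solution $\bar u$ of the nonautonomous equation together with finitely many translated solutions of the autonomous problem \eqref{1.3}; matching the Lagrange multipliers forces every profile to carry the same eigenvalue $c$. A single autonomous bubble at eigenvalue $c$ carries least mass when it is a ground state, and two ground-state bubbles exhaust the unit mass constraint precisely at the threshold energy $2^\pow\,\lambda_1^\infty$, so at most one bubble can split off below the threshold. Hence $(PS)_c$ holds for every $c\in(\lambda_1,2^\pow\,\lambda_1^\infty)$ except possibly when $c=\lambda_1^\infty$ (a single ground-state bubble escaping, $\bar u=0$) or when $c$ is itself an eigenvalue of the nonautonomous problem (a bubble escaping off a nonzero $\bar u$). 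In the latter case $c$ is already an eigenvalue, so the only minimax values among $\lambda_1,\dots,\lambda_N$ that could fail to be eigenvalues are those equal to $\lambda_1^\infty$ or to the threshold $2^\pow\,\lambda_1^\infty$.

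The heart of the argument, and the step I expect to be the main obstacle, is the strict inequality $\lambda_{N-1}<2^\pow\,\lambda_1^\infty$. To obtain it I would construct a closed symmetric set $A\subset\M$ with $\gamma(A)\ge N-1$ and $\sup_{u\in A}J(u)<2^\pow\,\lambda_1^\infty$. The natural candidate is a family of sign-changing two-bump superpositions of translated ground states, with bumps placed at antipodal points $\pm R\xi$ for $\xi$ ranging over a sphere $S^{N-2}$; the sign symmetry makes $A$ symmetric and, by a standard linking computation, raises its genus to $N-1$, while each configuration sits near the two-bubble energy $2^\pow\,\lambda_1^\infty$. The nonautonomous energy of such a configuration is lowered by the gain $\int_{\R^N}W\,u^2$, and here the slow-decay hypothesis \eqref{1.5} is decisive: since the ground state decays like $e^{-\sqrt{V^\infty}\,|x|}$, the overlap between the two bumps is of order $e^{-2\sqrt{V^\infty}\,R}$, whereas \eqref{1.5} forces $\int_{\R^N}W\,u^2\gtrsim c_0\,e^{-aR}$ with $a<2\,\sqrt{V^\infty}$, so the gain dominates the interaction and pushes $J$ strictly below the threshold uniformly over $A$. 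This yields $\lambda_{N-1}<2^\pow\,\lambda_1^\infty$, so none of $\lambda_1,\dots,\lambda_{N-1}$ reaches the threshold and at most the single top level $\lambda_N$ can be lost there.

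It remains to rule out $\lambda_1^\infty$ as a lost level, and this is where the hypothesis on positive eigenfunctions enters. For $j\ge2$ the minimax value $\lambda_j$ has genus index at least two, whereas the only obstruction to compactness at $\lambda_1^\infty$ is a single escaping ground state together with its reflection; as these profiles are sign-definite, the map $u\mapsto\int_{\R^N}u$ is odd and nonvanishing on them, so they form a set of genus one. A genus-one obstruction cannot pin a genus-$\ge2$ minimax value, and the absence of positive eigenfunctions with eigenvalues in $(\lambda_1,\lambda_1^\infty)$ rules out any competing branch that could otherwise block the downward deformation in that range; consequently $\lambda_j=\lambda_1^\infty$ for some $j\ge2$ would already force $\lambda_1^\infty$ to be a genuine compact eigenvalue. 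Combining the three ingredients, each of $\lambda_1,\dots,\lambda_{N-1}$ is an eigenvalue of \eqref{1.1}, giving at least $N-1$ of the first $N$ minimax levels as claimed. Besides the construction in the previous paragraph, the most delicate point is the compactness bookkeeping at $\lambda_1^\infty$, where the genus of the escaping-bubble obstruction must be controlled against the index of the minimax class.
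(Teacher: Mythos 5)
Your overall architecture (profile decomposition to control bubbling below $2^\pow\,\lambda_1^\infty$, a two-bump family to cap the top minimax level, hypotheses to exclude the bad alternatives) is the same as the paper's, but the central compactness step contains a genuine error. You claim that if a ground-state bubble escapes off a nonzero weak limit $\bar u$ at a level $c<2^\pow\,\lambda_1^\infty$, then ``$c$ is itself an eigenvalue of the nonautonomous problem.'' This is false, because the profiles in the decomposition are not normalized and the Lagrange multiplier is not scale invariant: if $u$ solves $-\Delta u+V(x)\,u=c\,|u|^{p-2}u$, then $su$ solves the same equation with multiplier $s^{2-p}c$. Writing $t_1=I(\bar u)$ and $t_2=1-t_1$ for the mass of the escaping ground state (whose normalization forces $t_2^\pow\, c=\lambda_1^\infty$), the eigenvalue carried by $\pi(\bar u)\in\M$ is $t_1^\pow\, c=\big[c^\wop-(\lambda_1^\infty)^\wop\big]^\pow<\lambda_1^\infty$, not $c$. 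So a failure of $(PS)_c$ produces a \emph{fixed-sign} eigenfunction at this strictly smaller level $\widetilde c$, and says nothing about $c$ being an eigenvalue (this is Lemma \ref{Lemma 2.3}$(ii)$ of the paper). This is precisely where the hypothesis that \eqref{1.1} has no positive eigenfunctions with eigenvalues in $(\lambda_1,\lambda_1^\infty)$ must enter: it forces $\widetilde c=\lambda_1$, i.e.\ $c=\lambda^\#=\big[\lambda_1^\wop+(\lambda_1^\infty)^\wop\big]^\pow$, so that $(PS)$ can fail at only \emph{one} level of $(\lambda_1^\infty,2^\pow\,\lambda_1^\infty)$, and hence (after separating the $\lambda_j$ via Lemma \ref{Lemma 2.4} and the finiteness hypothesis) at most one of $\lambda_2,\dots,\lambda_N$ can be lost. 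Your classification, if true, would render that hypothesis superfluous and would show that all of $\lambda_2,\dots,\lambda_{N-1}$ are eigenvalues outright; the fact that your argument never really uses the key assumption is the symptom of the error.

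The error propagates to your two-bump construction. Since the one level that can genuinely be lost is whichever $\lambda_j$ equals $\lambda^\#$ --- and this can be any of $\lambda_2,\dots,\lambda_N$, not necessarily the top one --- you need \emph{all} $N$ levels strictly below the threshold, i.e.\ $\lambda_N<2^\pow\,\lambda_1^\infty$. Your family is parametrized by $S^{N-2}$ and so has genus only $N-1$, which yields only $\lambda_{N-1}<2^\pow\,\lambda_1^\infty$; with the corrected compactness analysis you could then lose both the level equal to $\lambda^\#$ and the level $\lambda_N$ (where $(PS)$ may also fail), leaving only $N-2$ eigenvalues. The paper instead uses the full sphere: $y\in S^{N-1}\mapsto\pi\big(w_1^\infty(\cdot+Ry)-w_1^\infty(\cdot-Ry)\big)$ is odd and continuous, so its image has genus $\ge N$, and the interaction estimates together with \eqref{1.5} (choosing $a<b<2\sqrt{V^\infty}$) give $\lambda_N<2^\pow\,\lambda_1^\infty$. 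Two smaller points: you never prove $\lambda_1<\lambda_1^\infty$, which is needed for $\lambda_1$ to be attained and counted among the eigenvalues (it follows from \eqref{1.5} by evaluating $J$ at $w_1^\infty$); and your final paragraph about excluding $\lambda_1^\infty$ as a minimax value is moot, since \eqref{1.4} and $\lambda_2^\infty=2^\pow\,\lambda_1^\infty$ already give $\lambda_2\ge\lambda_2^\infty-\pnorm[\wop]{W}>\lambda_1^\infty$, which is \eqref{2.10}; the genus-counting heuristic offered there would in any case require a genuine deformation argument near a noncompact set of escaping bubbles, which is exactly what one cannot take for granted in this problem.
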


Finally we prove a symmetry breaking result when $V$ is radial. Let $\A_r$ denote the class of all nonempty closed symmetric subsets of $\M_r = \M \cap H^1_r(\R^N)$ and set
\[
\lambda_{j,\, r} := \inf_{\substack{A \in \A_r\\[1pt]
\gamma(A) \ge j}}\, \sup_{u \in A}\, J(u), \quad \lambda_{j,\, r}^\infty := \inf_{\substack{A \in \A_r\\[1pt]
\gamma(A) \ge j}}\, \sup_{u \in A}\, J^\infty(u), \quad j \ge 1.
\]
Since the imbedding $H^1_r(\R^N) \hookrightarrow L^p(\R^N)$ is compact, these radial minimax levels are critical for $\restr{J}{\M_r}$ and $\restr{J^\infty}{\M_r}$, respectively. We have $\lambda_{1,\, r} = \lambda_1$ and $\lambda_{1,\, r}^\infty = \lambda_1^\infty$. In general, $\lambda_j \le \lambda_{j,\, r}$ and $\lambda_j^\infty \le \lambda_{j,\, r}^\infty$, and it is known that $\lambda_2^\infty$ is not critical for $\restr{J^\infty}{\M}$ (see, e.g., Weth \cite{MR2263672}), so $\lambda_2^\infty < \lambda_{2,\, r}^\infty$.

\begin{theorem} \label{Theorem 1.3}
Assume that $N \ge 3$, $V \in L^\infty(\R^N)$ is radial and satisfies \eqref{1.2}, $p \in (2,2^\ast)$, and $W \in L^\wop(\R^N)$ satisfies \eqref{1.4}, \eqref{1.5}, and
\begin{equation} \label{1.8}
\pnorm[\wop]{W} \le \lambda_{2,\, r}^\infty - \lambda_2^\infty.
\end{equation}
Then the equation \eqref{1.1} has $N - 2$ pairs of eigenfunctions on $\M$ corresponding to eigenvalues in $(\lambda_{1,\, r},\lambda_{2,\, r})$. If, in addition, \eqref{1.1} has no positive eigenfunctions on $\M$ corresponding to eigenvalues in $(\lambda_{1,\, r},\lambda_{1,\, r}^\infty)$, and has only a finite number of eigenfunctions on $\M$ corresponding to eigenvalues in $(\lambda_{1,\, r},\lambda_2^\infty)$, then at least $N - 2$ of the minimax levels $\lambda_2,\dots,\lambda_N$ are eigenvalues of \eqref{1.1} in $(\lambda_{1,\, r},\lambda_{2,\, r})$.
\end{theorem}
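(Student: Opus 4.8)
The plan is to prove Theorem 1.3 by running a comparison argument between the nonsymmetric minimax levels $\lambda_j$ and their radial counterparts $\lambda_{j,r}$, using the key facts already established: the compactness of the radial embedding (so the $\lambda_{j,r}$ are genuinely critical), the formula \eqref{1.6} for the levels at infinity, the perturbation estimate controlling $|\lambda_j - \lambda_j^\infty|$ in terms of $\pnorm[\wop]{W}$, and the standard deformation/genus machinery that converts a level into a critical value whenever Palais--Smale holds there. The structure should parallel Theorem 1.1 and Theorem 1.2, so I expect to reuse most of that apparatus.

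Let me sketch the two assertions. For the first claim, I would show that the $N-2$ levels $\lambda_{2,r},\dots,\lambda_{N-1,r}$ (or the appropriate window of radial levels) all lie strictly inside the interval $(\lambda_{1,r},\lambda_{2,r})$... — wait, that cannot be right since they are indexed increasingly. Let me reconsider: the point is that each $\lambda_{j,r}$ for the relevant range is \emph{strictly below} the threshold $\lambda_{2,r}$ after the symmetry-breaking gap $\lambda_2^\infty < \lambda_{2,r}^\infty$ is exploited. Concretely, the hypothesis \eqref{1.8}, $\pnorm[\wop]{W} \le \lambda_{2,r}^\infty - \lambda_2^\infty$, is calibrated precisely so that the perturbation from $J^\infty$ to $J$ cannot push the nonsymmetric level $\lambda_j$ up past the radial level $\lambda_{2,r}$. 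Combined with \eqref{1.6}, which collapses $\lambda_2^\infty = \cdots = \lambda_N^\infty = 2^\pow\lambda_1^\infty$, this traps $\lambda_2,\dots,\lambda_N$ inside $(\lambda_{1,r},\lambda_{2,r})$. The genus inequality $\gamma$ then forces at least $N-2$ of these to be distinct critical values, yielding $N-2$ pairs of eigenfunctions by the oddness of the functional.

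For the conditional second assertion, the argument is the analogue of Theorem 1.2 restricted to the interval $(\lambda_{1,r},\lambda_{2,r})$: under the stated nonexistence of positive eigenfunctions in $(\lambda_{1,r},\lambda_{1,r}^\infty)$ and finiteness of eigenfunctions in $(\lambda_{1,r},\lambda_2^\infty)$, the profile decomposition for critical sequences shows that any failure of Palais--Smale at a level in this interval must come from a noncompact piece carrying at least the ground-state energy $\lambda_1^\infty$ at infinity, so the only possible noncompactness thresholds are ruled out by the genus-counting and the trapping above. Then a standard deformation argument shows each of $\lambda_2,\dots,\lambda_N$, being strictly below the first such threshold, is attained, and the genus bound gives that at least $N-2$ of them are eigenvalues lying in $(\lambda_{1,r},\lambda_{2,r})$.

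The hard part will be the compactness analysis: verifying that the profile decomposition, combined with the quantitative gaps from \eqref{1.4}, \eqref{1.6}, and \eqref{1.8}, really does confine every $\lambda_j$ ($2\le j\le N$) strictly below $\lambda_{2,r}$ and excludes spurious noncompact critical sequences in the window. In particular one must check that the symmetry-breaking inequality $\lambda_2^\infty < \lambda_{2,r}^\infty$ is strict by a definite margin at least matching $\pnorm[\wop]{W}$, and that the perturbation estimate is uniform across all $N-2$ levels rather than just the first. Once these energy inequalities are pinned down, the genus and deformation steps are routine and mirror the proofs of the earlier theorems.
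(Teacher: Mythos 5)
Your overall strategy coincides with the paper's (reuse the machinery of Theorems \ref{Theorem 1.1} and \ref{Theorem 1.2}, and trap $\lambda_2,\dots,\lambda_N$ below $\lambda_{2,\, r}$), but two essential steps are missing or wrong as you state them. First, the trapping must be \emph{strict}, and strictness cannot come from \eqref{1.4}, \eqref{1.6}, \eqref{1.8} and the profile decomposition, which is where you locate ``the hard part.'' Those ingredients, together with \eqref{2.4} applied to the radial levels, only give the non-strict chain $\lambda_N \le \lambda_N^\infty = \lambda_2^\infty \le \lambda_{2,\, r}^\infty - \pnorm[\wop]{W} \le \lambda_{2,\, r}$. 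The missing strict inequality is $\lambda_N < 2^\pow\, \lambda_1^\infty = \lambda_2^\infty$, i.e.\ \eqref{3.2}, and it is exactly here that the slow-decay hypothesis \eqref{1.5} enters: one evaluates $J$ on the odd test map $h(y) = \pi\big(w_1^\infty(\cdot + Ry) - w_1^\infty(\cdot - Ry)\big)$, $y \in S^{N-1}$, and the interaction estimates of Lemmas \ref{Lemma 3.1} and \ref{Lemma 3.2} show that the gain $c\, e^{-aR}$ from $W$ beats the error $\O(e^{-bR})$, pushing $\sup J(h(S^{N-1}))$ strictly below $2^\pow\, \lambda_1^\infty$; since $\gamma(h(S^{N-1})) \ge N$ this yields \eqref{3.2}. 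You never invoke \eqref{1.5}, and without \eqref{3.2} both the strict confinement $\lambda_N < \lambda_{2,\, r}$ and the applicability of Lemma \ref{Lemma 2.3} and Proposition \ref{Proposition 2.5} (which require $\lambda_j < 2^\pow\, \lambda_1^\infty$) collapse.

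Second, the step ``the genus inequality then forces at least $N-2$ of these to be distinct critical values'' is not valid: without compactness, minimax values over genus classes need not be critical, and genus by itself gives no such count. The actual mechanism -- and the reason the count is $N-2$ rather than $N-1$, with eigenvalues in the \emph{open} interval $(\lambda_{1,\, r},\lambda_{2,\, r})$ -- is Lemma \ref{Lemma 2.3}\ref{Lemma 2.3.ii}: if a critical sequence at $\lambda_j$ fails to converge, its profile decomposition has exactly two profiles and produces an eigenvalue $\widetilde{\lambda}_j = \big[\lambda_j^\wop - (\lambda_1^\infty)^\wop\big]^\pow \in [\lambda_{1,\, r},\lambda_1^\infty)$ with a \emph{positive} eigenfunction; after using Lemma \ref{Lemma 2.4} to assume $\lambda_2 < \cdots < \lambda_N$, the levels $\widetilde{\lambda}_2 < \cdots < \widetilde{\lambda}_N$ are distinct, so at most one of them can coincide with the left endpoint $\lambda_{1,\, r} = \lambda_1$; that single possible loss is where $N-2$ comes from. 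Likewise, in the conditional part your claim that each $\lambda_j$ is ``strictly below the first noncompactness threshold'' is incorrect: under the no-positive-eigenfunction hypothesis the unique threshold in $(\lambda_1^\infty, 2^\pow\, \lambda_1^\infty)$ is $\lambda^\# = \big[\lambda_1^\wop + (\lambda_1^\infty)^\wop\big]^\pow$, and the $\lambda_j$ may perfectly well lie at or above it; the correct conclusion is that at most one $\lambda_j$ can equal $\lambda^\#$, so at least $N-2$ of $\lambda_2,\dots,\lambda_N$ are themselves eigenvalues, lying in $(\lambda_{1,\, r},\lambda_{2,\, r})$ by the trapping. (Your worry about uniformity of the perturbation estimate across levels is a non-issue: \eqref{2.4} is uniform in $u \in \M$, hence in $j$.)
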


Our proofs will use the concentration compactness principle of Lions \cite{MR778970,MR778974,MR879032}, expressed as a suitable profile decomposition for critical sequences of $\restr{J}{\M}$, to overcome the difficulties arising from the lack of compactness.

\section{Preliminaries}

We will use the norm
\[
\norm{u} = \sqrt{J^\infty(u)}
\]
on $H^1(\R^N)$, which is equivalent to the standard norm. In the absence of a compact Sobolev imbedding, the main technical tool we use here for handling the convergence matters is the following profile decomposition of Solimini \cite{MR1340267} for bounded sequences in $H^1(\R^N)$.

\begin{lemma} \label{Lemma 2.1}
Let $u_k \in H^1(\R^N)$ be a bounded sequence, and assume that there is a constant $\delta > 0$ such that if $u_k(\cdot + y_k) \rightharpoonup w \ne 0$ on a renumbered subsequence for some $y_k \in \R^N$ with $|y_k| \to \infty$, then $\norm{w} \ge \delta$. Then there are $m \in \N$, $w^{(n)} \in H^1(\R^N)$, $y^{(n)}_k \in \R^N,\, y^{(1)}_k = 0$ with $k \in \N$, $n \in \set{1,\dots,m}$, $w^{(n)} \ne 0$ for $n \ge 2$, such that, on a renumbered subsequence,
\begin{gather}
u_k(\cdot + y^{(n)}_k) \rightharpoonup w^{(n)}, \label{2.1}\\[12.5pt]
\big|y^{(n)}_k - y^{(l)}_k\big| \to \infty \text{ for } n \ne l, \notag\\[10pt]
\sum_{n=1}^m\, \norm{w^{(n)}}^2 \le \liminf \norm{u_k}^2, \notag\\[2.5pt]
u_k - \sum_{n=1}^m\, w^{(n)}(\cdot - y^{(n)}_k) \to 0 \text{ in } L^p(\R^N) \quad \forall p \in (2,2^\ast). \label{2.2}
\end{gather}
\end{lemma}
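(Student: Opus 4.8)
The plan is to build the profiles by an iterative extraction scheme, peeling off one nonzero weak limit at a time, and to use the hypothesis together with an energy (Bessel type) inequality to force the procedure to terminate after finitely many steps. I would set $y^{(1)}_k = 0$ and, after passing to a subsequence, let $w^{(1)}$ be the weak limit of $u_k$ in $H^1(\R^N)$. Suppose $w^{(1)}, \dots, w^{(n)}$ and translations $y^{(1)}_k, \dots, y^{(n)}_k$ have been extracted, and consider the remainder
\[
r^{(n)}_k = u_k - \sum_{l=1}^n\, w^{(l)}(\cdot - y^{(l)}_k).
\]
If $r^{(n)}_k \to 0$ in $L^p(\R^N)$ for every $p \in (2,2^\ast)$, I would stop and set $m = n$. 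Otherwise, by the vanishing lemma of Lions, the failure of $L^p$ convergence of the bounded sequence $r^{(n)}_k$ gives
\[
\limsup_k\, \sup_{y \in \R^N}\, \int_{B(y,1)} \big|r^{(n)}_k\big|^2 > 0,
\]
so there exist $y^{(n+1)}_k \in \R^N$ along which $r^{(n)}_k(\cdot + y^{(n+1)}_k)$ has, on a further subsequence, a weak limit $w^{(n+1)} \ne 0$.

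The two structural facts to check at each step are the mutual divergence of the translations and an almost-orthogonality of the norms. For the divergence, I would argue by contradiction: if $\big|y^{(n+1)}_k - y^{(l)}_k\big|$ stayed bounded for some $l \le n$, then along a subsequence $y^{(n+1)}_k - y^{(l)}_k \to z$, and since $r^{(n)}_k$ already has the $l$-th profile subtracted, translating back to $y^{(l)}_k$ would force $w^{(n+1)} = 0$, a contradiction; hence $\big|y^{(n+1)}_k - y^{(l)}_k\big| \to \infty$ for all $l \le n$. In particular $\big|y^{(n+1)}_k\big| \to \infty$, so the hypothesis applies and yields $\norm{w^{(n+1)}} \ge \delta$. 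Because $\norm{\cdot}$ is the Hilbert norm induced by $J^\infty$ and the translations diverge, the cross terms $\ip{w^{(l)}(\cdot - y^{(l)}_k)}{w^{(n)}(\cdot - y^{(n)}_k)}$ and $\ip{r^{(n)}_k}{w^{(l)}(\cdot - y^{(l)}_k)}$ vanish in the limit, and expanding the square gives the almost-orthogonality identity
\[
\sum_{l=1}^{n+1}\, \norm{w^{(l)}}^2 + \liminf_k\, \norm{r^{(n+1)}_k}^2 \le \liminf_k\, \norm{u_k}^2,
\]
which in turn yields the stated inequality $\sum_{n=1}^m \norm{w^{(n)}}^2 \le \liminf \norm{u_k}^2$.

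Finiteness of the scheme then follows by quantization: each profile $w^{(n)}$ with $n \ge 2$ has $\big|y^{(n)}_k\big| \to \infty$ and so contributes at least $\delta^2$ to the sum above, while the right-hand side is bounded by the fixed constant $\sup_k \norm{u_k}^2 < \infty$; hence the process must stop after at most $1 + \delta^{-2}\, \sup_k \norm{u_k}^2$ steps, at which stage $r^{(m)}_k \to 0$ in every $L^p$, $p \in (2,2^\ast)$, which is \eqref{2.2}. Throughout I would run a diagonal argument so that all the countably many subsequence passages are realized on one renumbered subsequence. The main obstacle I anticipate is the bookkeeping of the diverging translations: one must verify that subtracting the previously extracted profiles genuinely removes any concentration near the earlier centers, so that each newly located center is asymptotically separated from all the others --- this separation is precisely what makes the orthogonality expansion, and hence the quantization that bounds $m$, go through.
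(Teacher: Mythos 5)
Your proposal is correct in substance, but there is nothing in the paper to compare it against: the paper does not prove Lemma \ref{Lemma 2.1} at all, it simply quotes it as a known profile decomposition due to Solimini \cite{MR1340267} (see also Tintarev and Fieseler \cite{MR2294665}, where results of this type are proved in an abstract Hilbert-space framework). What you have written is essentially the standard concentration-compactness proof of such decompositions: iterative peeling of profiles located by the contrapositive of Lions' vanishing lemma (non-vanishing plus local compactness of $H^1 \hookrightarrow L^2_{\mathrm{loc}}$ gives a nonzero weak limit), mutual divergence of the translation centers by the ``translate back and get zero'' contradiction, a Bessel-type almost-orthogonality expansion in the Hilbert norm $\norm{\cdot}$, and termination because each profile with $n \ge 2$ costs at least $\delta^2$ by the hypothesis of the lemma. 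Two small points you glossed over but which close easily: (a) the lemma asserts $u_k(\cdot + y^{(n)}_k) \rightharpoonup w^{(n)}$, whereas your extraction defines $w^{(n)}$ as the weak limit of the \emph{remainder} $r^{(n-1)}_k(\cdot + y^{(n)}_k)$; these agree because the previously subtracted profiles, translated by the mutually diverging centers, converge weakly to zero, so this identification should be carried along as part of the induction (it is also what makes your divergence argument legitimate, since it supplies $r^{(n)}_k(\cdot + y^{(l)}_k) \rightharpoonup 0$); and (b) no diagonal argument is actually needed, since the quantization bound shows the scheme stops after finitely many extractions, so only finitely many subsequence passages occur. Note also that Solimini's own route is somewhat different and stronger --- his paper works with Lorentz norms and handles dilations as well as translations --- whereas your argument, using only the $\delta$-hypothesis that the paper builds into the statement, is the more elementary translation-only version, which is exactly what the paper needs.
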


Recall that $u_k \in \M$ is a critical sequence for $\restr{J}{\M}$ at the level $c \in \R$ if
\begin{equation} \label{2.3}
J'(u_k) - \mu_k\, I'(u_k) \to 0, \qquad J(u_k) \to c
\end{equation}
for some sequence $\mu_k \in \R$. By the H\"older inequality,
\begin{equation} \label{2.4}
|J^\infty(u) - J(u)| \le \int_{\R^N} |W(x)|\, u^2 \le \pnorm[\wop]{W} \pnorm{u}^2 = \pnorm[\wop]{W} \quad \forall u \in \M,
\end{equation}
so $u_k$ is bounded. Equation \eqref{2.3} implies
\begin{equation} \label{2.5}
- \Delta u_k + V(x)\, u_k = c_k\, |u_k|^{p-2}\, u_k + \o(1),
\end{equation}
where $c_k = (p/2)\, \mu_k \to c$ since $\ip{J'(u_k)}{u_k} = 2\, J(u_k)$ and $\ip{I'(u_k)}{u_k} = p\, I(u_k) = p$. So if $u_k(\cdot + y_k) \rightharpoonup w$ on a renumbered subsequence for some $y_k \in \R^N$ with $|y_k| \to \infty$, then $w$ solves \eqref{1.3} with $\lambda = c$ by \eqref{1.2}, in particular, $\norm{w}^2 = c \pnorm{w}^p$. If $w \ne 0$, it follows that $c > 0$ and $\norm{w} \ge \big[(\lambda_1^\infty)^p/c\big]^{1/2\, (p-1)}$ since $\norm{w}^2/\pnorm{w}^2 \ge \lambda_1^\infty$. Thus, we have the following profile decomposition of Benci and Cerami \cite{MR898712} for critical sequences of $\restr{J}{\M}$.

\begin{lemma} \label{Lemma 2.2}
Let $u_k \in \M$ be a critical sequence for $\restr{J}{\M}$ at the level $c \in \R$. Then it admits a renumbered subsequence that satisfies the conclusions of Lemma \ref{Lemma 2.1} for some $m \in \N$, and, in addition,
\begin{gather}
- \Delta w^{(1)} + V(x)\, w^{(1)} = c\, |w^{(1)}|^{p-2}\, w^{(1)}, \notag\\[12.5pt]
\qquad - \Delta w^{(n)} + V^\infty\, w^{(n)} = c\, |w^{(n)}|^{p-2}\, w^{(n)}, \quad n = 2,\dots,m, \label{2.6}\\[12.5pt]
J(w^{(1)}) = c\, I(w^{(1)}), \qquad J^\infty(w^{(n)}) = c\, I(w^{(n)}), \quad n = 2,\dots,m, \label{2.7}\\[10pt]
\sum_{n=1}^m\, I(w^{(n)}) = 1, \qquad J(w^{(1)}) + \sum_{n=2}^m\, J^\infty(w^{(n)}) = c, \label{2.8}\\[2.5pt]
u_k - \sum_{n=1}^m\, w^{(n)}(\cdot - y^{(n)}_k) \to 0 \text{ in } H^1(\R^N). \label{2.9}
\end{gather}
\end{lemma}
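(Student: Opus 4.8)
My plan begins by checking that Lemma~\ref{Lemma 2.1} applies. The verification has essentially been carried out in the discussion preceding the statement: the level $c$ is fixed, and any nonzero profile $w$ arising as a weak limit $u_k(\cdot + y_k) \rightharpoonup w$ along a subsequence with $|y_k| \to \infty$ satisfies $\norm{w} \ge \big[(\lambda_1^\infty)^p/c\big]^{1/2\,(p-1)}$, which in particular forces $c > 0$. Thus the uniform lower bound $\delta$ required in Lemma~\ref{Lemma 2.1} is available, and I would invoke that lemma to produce $m \in \N$, the profiles $w^{(n)}$, and the translations $y^{(n)}_k$ with $y^{(1)}_k = 0$, together with the weak convergences \eqref{2.1}, the orthogonality $\big|y^{(n)}_k - y^{(l)}_k\big| \to \infty$, the norm inequality, and the $L^p$ convergence \eqref{2.2} of the remainder. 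These are exactly the conclusions of Lemma~\ref{Lemma 2.1} asserted in the statement.

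To identify the limiting equations \eqref{2.6}, I would pass to the weak limit in \eqref{2.5}. Testing \eqref{2.5} against an arbitrary $\varphi \in C^\infty_c(\R^N)$ and using $u_k \rightharpoonup w^{(1)}$ in $H^1(\R^N)$ (recall $y^{(1)}_k = 0$), the compactness of the embedding into $L^p_{\mathrm{loc}}(\R^N)$, and $c_k \to c$, I obtain the first equation in \eqref{2.6}. For $n \ge 2$ the orthogonality together with $y^{(1)}_k = 0$ gives $|y^{(n)}_k| \to \infty$; translating \eqref{2.5} by $y^{(n)}_k$ and using \eqref{1.2} to replace $V(\cdot + y^{(n)}_k)$ by $V^\infty$ in the limit, exactly as already observed before the statement, yields the autonomous equations in \eqref{2.6}. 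Testing each equation in \eqref{2.6} against its own solution $w^{(n)}$ then produces the identities \eqref{2.7}.

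For \eqref{2.8}, the orthogonality of the translations decouples the $L^p$ norms, so that $I\big(\sum_{n=1}^m w^{(n)}(\cdot - y^{(n)}_k)\big) \to \sum_{n=1}^m I(w^{(n)})$; combining this with \eqref{2.2} and $I(u_k) = 1$ gives the first relation $\sum_{n=1}^m I(w^{(n)}) = 1$. Multiplying the identities \eqref{2.7} by the fixed level $c$ and summing then gives $J(w^{(1)}) + \sum_{n=2}^m J^\infty(w^{(n)}) = c \sum_{n=1}^m I(w^{(n)}) = c$, which is the second relation in \eqref{2.8}.

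The substantive point, and the step I expect to be the main obstacle, is the upgrade from $L^p$ to $H^1$ convergence in \eqref{2.9}. Writing $r_k = u_k - \sum_{n=1}^m w^{(n)}(\cdot - y^{(n)}_k)$, the plan is to establish $\lim \norm{u_k}^2 = \sum_{n=1}^m \norm{w^{(n)}}^2$ and then combine this with \eqref{2.1} to force $\norm{r_k} \to 0$. To compute the limit of $\norm{u_k}^2 = J^\infty(u_k) = J(u_k) + \int_{\R^N} W\, u_k^2$, I would use $J(u_k) \to c$ and argue that only the profile anchored at the origin contributes to the potential term: since $W = V^\infty - V$ decays at infinity and lies in $L^\wop(\R^N)$, a change of variables gives $\int_{\R^N} W\, \big(w^{(n)}(\cdot - y^{(n)}_k)\big)^2 = \int_{\R^N} W(\cdot + y^{(n)}_k)\, \big(w^{(n)}\big)^2 \to 0$ for $n \ge 2$, whence $\int_{\R^N} W\, u_k^2 \to \int_{\R^N} W\, \big(w^{(1)}\big)^2 = J^\infty(w^{(1)}) - J(w^{(1)})$. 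Together with the energy identity in \eqref{2.8} this yields $\lim \norm{u_k}^2 = \sum_{n=1}^m \norm{w^{(n)}}^2$. On the other hand, translation invariance of the inner product $\ip{\cdot}{\cdot}$ associated with $\norm{\cdot}$ and the orthogonality of the translations give $\norm{\sum_{n=1}^m w^{(n)}(\cdot - y^{(n)}_k)}^2 \to \sum_{n=1}^m \norm{w^{(n)}}^2$, while \eqref{2.1} gives $\ip{u_k}{\sum_{n=1}^m w^{(n)}(\cdot - y^{(n)}_k)} = \sum_{n=1}^m \ip{u_k(\cdot + y^{(n)}_k)}{w^{(n)}} \to \sum_{n=1}^m \norm{w^{(n)}}^2$. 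Expanding $\norm{r_k}^2 = \norm{u_k}^2 - 2\, \ip{u_k}{\sum_{n=1}^m w^{(n)}(\cdot - y^{(n)}_k)} + \norm{\sum_{n=1}^m w^{(n)}(\cdot - y^{(n)}_k)}^2$ and letting $k \to \infty$ then gives $\norm{r_k}^2 \to 0$, which is \eqref{2.9}. The delicate point throughout is the clean separation of $\int_{\R^N} W\, u_k^2$ into the contribution of $w^{(1)}$ at the origin and the vanishing contributions of the profiles escaping to infinity, and this is precisely where the hypothesis $W \in L^\wop(\R^N)$ together with the decay of $W$ is used.
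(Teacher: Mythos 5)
Your proposal is correct, and for \eqref{2.6}, \eqref{2.7}, and the second identity of \eqref{2.8} it coincides with the paper's (very brief) sketch; the genuine divergence is in how you handle the first identity of \eqref{2.8} and, more importantly, the upgrade \eqref{2.9}. For the identity $\sum_n I(w^{(n)}) = 1$ the paper simply cites \cite[Lemma 3.4]{MR2294665}, whereas you reprove it directly from the divergence of the relative translations together with \eqref{2.2}; that is a fair, self-contained substitute. For \eqref{2.9} the paper's intended argument (``follows from \eqref{2.2}, \eqref{2.5}, and the continuity of the Sobolev imbedding'') is a duality argument: set $r_k = u_k - \sum_n w^{(n)}(\cdot - y^{(n)}_k)$ and pair the equation \eqref{2.5} for $u_k$ and the equations \eqref{2.6} for the profiles against $r_k$; since $|u_k|^{p-2}u_k$ and the corresponding profile terms are bounded in $L^{p/(p-1)}(\R^N)$ while $r_k \to 0$ in $L^p(\R^N)$, every pairing vanishes and $\norm{r_k}^2 \to 0$ follows without ever computing $\lim \norm{u_k}$. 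You instead run an energy-bookkeeping argument: split $\norm{u_k}^2 = J(u_k) + \int_{\R^N} W u_k^2$, show the potential term sees only the profile anchored at the origin, feed in the second identity of \eqref{2.8} to get $\norm{u_k}^2 \to \sum_n \norm{w^{(n)}}^2$, and conclude by ``weak convergence plus convergence of norms''. This is valid, but it costs more machinery: it genuinely uses the standing assumption $W \in L^\wop(\R^N)$ (H\"older) and the decay of $W$ from \eqref{1.2} to kill the escaping profiles, and it silently glosses two routine but necessary estimates --- the $W$-cross terms between distinct profiles (which vanish because the relative translations diverge) and the terms involving the remainder $r_k$, which must be controlled via H\"older and \eqref{2.2} before you may replace $u_k$ by the sum of profiles inside $\int_{\R^N} W u_k^2$. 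The paper's route is shorter and needs only the equations and the $L^p$ convergence; yours makes the energy identity do visible work and reduces the final step to pure Hilbert-space geometry, at the price of tracking the potential term explicitly.
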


\begin{proof}
The proof is based on standard arguments and we only sketch it. Equations in \eqref{2.6} follow from \eqref{2.5}, \eqref{2.1}, and \eqref{1.2}, and \eqref{2.7} is immediate from \eqref{2.6}. First equation in \eqref{2.8} is a particular case of Tintarev and Fieseler \cite[Lemma 3.4]{MR2294665}, and the second follows from \eqref{2.7} and the first. The limit \eqref{2.9} follows from \eqref{2.2}, \eqref{2.5}, and the continuity of the Sobolev imbedding.
\end{proof}

By \eqref{2.4},
\[
0 \le \lambda_j^\infty - \lambda_j \le \pnorm[\wop]{W} \quad \forall j \ge 1,
\]
and combining this with \eqref{1.4}, \eqref{1.6}, and \eqref{1.7} gives
\begin{equation} \label{2.10}
0 < \lambda_1 \le \lambda_1^\infty < \lambda_2 \le \cdots \le \lambda_N \le 2^\pow\, \lambda_1^\infty.
\end{equation}
Set
\[
\lambda^\# = \Big[\lambda_1^\wop + (\lambda_1^\infty)^\wop\Big]^\pow \in (\lambda_1^\infty,2^\pow\, \lambda_1^\infty],
\]
and let
\[
\pi(u) = \frac{u}{\pnorm{u}}
\]
be the radial projection of $u \in H^1(\R^N) \setminus \set{0}$ on $\M$.

\begin{lemma} \label{Lemma 2.3}
Assume that $\lambda_1^\infty < \lambda_j < 2^\pow\, \lambda_1^\infty$, let $u_k \in \M$ be a critical sequence of $\restr{J}{\M}$ at the level $\lambda_j$, and consider the profile decomposition of $u_k$ given in Lemma \ref{Lemma 2.2}.
\begin{enumroman}
\item \label{Lemma 2.3.i} If $\lambda_j < \lambda^\#$, then $m = 1$, $u_k \to w^{(1)}$, $w^{(1)}$ is a critical point of $\restr{J}{\M}$, and $J(w^{(1)}) = \lambda_j$.
\item \label{Lemma 2.3.ii} If $\lambda_j \ge \lambda^\#$ and $u_k \not\to w^{(1)}$, then $m = 2$, $w^{(1)} \ne 0$, $\pi(w^{(1)})$ is a critical point of $\restr{J}{\M}$,
    \begin{equation} \label{2.11}
    J(\pi(w^{(1)})) = \Big[\lambda_j^\wop - (\lambda_1^\infty)^\wop\Big]^\pow \in [\lambda_1,\lambda_1^\infty),
    \end{equation}
    and $\pi(w^{(1)})$ has fixed sign.
\end{enumroman}
\end{lemma}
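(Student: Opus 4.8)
The plan is to extract everything from the profile decomposition in Lemma \ref{Lemma 2.2}, turning the structural identities \eqref{2.6}--\eqref{2.8} into an energy-counting argument. Two lower bounds on the $L^p$-mass of the individual profiles drive the whole proof. For $n \ge 2$ the profile $w^{(n)} \ne 0$ solves the autonomous equation in \eqref{2.6} with $\lambda = \lambda_j$, so by \eqref{2.7} and the variational characterization of $\lambda_1^\infty$ one has $J^\infty(\pi(w^{(n)})) = \lambda_j\, \pnorm{w^{(n)}}^{p-2} \ge \lambda_1^\infty$, whence $I(w^{(n)}) \ge (\lambda_1^\infty/\lambda_j)^\wop$; the same computation with $J$ and $\lambda_1$ in place of $J^\infty$ and $\lambda_1^\infty$ gives $I(w^{(1)}) \ge (\lambda_1/\lambda_j)^\wop$ whenever $w^{(1)} \ne 0$. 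I would record these two bounds first.

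Feeding them into the mass identity $\sum_{n=1}^m I(w^{(n)}) = 1$ from \eqref{2.8}, together with $\lambda_j < 2^\pow \lambda_1^\infty$ (so that $(\lambda_1^\infty/\lambda_j)^\wop > 1/2$), controls $m$ at once: two profiles at infinity would give $1 \ge 2\,(\lambda_1^\infty/\lambda_j)^\wop > 1$, impossible, so there is at most one profile at infinity; and an origin profile together with an infinity profile forces $1 \ge (\lambda_1/\lambda_j)^\wop + (\lambda_1^\infty/\lambda_j)^\wop$, i.e. $\lambda_j^\wop \ge \lambda_1^\wop + (\lambda_1^\infty)^\wop$, i.e. $\lambda_j \ge \lambda^\#$. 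This dichotomy is exactly what separates cases \ref{Lemma 2.3.i} and \ref{Lemma 2.3.ii}: for $\lambda_j < \lambda^\#$ the origin profile and an infinity profile cannot coexist, so the only possibilities left are $m = 1$ (which forces $w^{(1)} \ne 0$, since the masses sum to $1$) or a single escaping bubble with $w^{(1)} = 0$ and $m = 2$.

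The remaining inputs are the uniqueness of the positive solution of the autonomous problem (Berestycki--Lions, Kwong) and the nodal energy-doubling estimate, obtained by testing the equation against $v^+$ and $v^-$ on disjoint supports. To finish case \ref{Lemma 2.3.i} I rule out the escaping bubble: if $w^{(1)} = 0$ then $w^{(2)} \in \M$ solves the autonomous equation with $J^\infty(w^{(2)}) = \lambda_j > \lambda_1^\infty$, so $w^{(2)}$ is not a ground state and hence changes sign, giving $I(w^{(2)}) \ge 2\,(\lambda_1^\infty/\lambda_j)^\wop$ and thus $\lambda_j \ge 2^\pow \lambda_1^\infty$, a contradiction. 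Therefore $m = 1$, and \eqref{2.9} yields $u_k \to w^{(1)}$ in $H^1(\R^N)$, with $w^{(1)}$ an eigenfunction at level $\lambda_j$. In case \ref{Lemma 2.3.ii}, $u_k \not\to w^{(1)}$ rules out $m = 1$, the same escaping-bubble argument rules out $w^{(1)} = 0$, and the counting then forces $m = 2$ with $w^{(1)} \ne 0$. Here $I(w^{(2)}) = 1 - I(w^{(1)}) \le 1 - (\lambda_1/\lambda_j)^\wop$; if $w^{(2)}$ changed sign we would get $2\,(\lambda_1^\infty)^\wop \le \lambda_j^\wop - \lambda_1^\wop < 2\,(\lambda_1^\infty)^\wop$, impossible, so $w^{(2)}$ is the ground state and $I(w^{(2)}) = (\lambda_1^\infty/\lambda_j)^\wop$ exactly. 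Then $I(w^{(1)}) = 1 - (\lambda_1^\infty/\lambda_j)^\wop$ and the scaling identity $J(\pi(w^{(1)})) = \lambda_j\, I(w^{(1)})^\pow$ collapses (using $\wop \cdot \pow = 1$) to $J(\pi(w^{(1)})) = \big[\lambda_j^\wop - (\lambda_1^\infty)^\wop\big]^\pow$, which is \eqref{2.11}; the bracketed range $[\lambda_1,\lambda_1^\infty)$ follows from $\lambda^\# \le \lambda_j < 2^\pow \lambda_1^\infty$. Since $w^{(1)}$ solves the nonautonomous equation in \eqref{2.6}, its radial projection $\pi(w^{(1)})$ solves it with a rescaled eigenvalue and is therefore a critical point of $\restr{J}{\M}$.

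The hard part will be the fixed sign of $\pi(w^{(1)})$. The natural route is again nodal doubling: if $\pi(w^{(1)})$ changed sign at its level $\mu = J(\pi(w^{(1)})) < \lambda_1^\infty$, then $\mu \ge 2^\pow \lambda_1$. This excludes sign change only when $2^\pow \lambda_1 \ge \lambda_1^\infty$, that is $\lambda_1 \ge 2^{-\pow}\lambda_1^\infty$, whereas the crude bound $\lambda_1 > (2 - 2^\pow)\lambda_1^\infty$ coming from \eqref{2.4} and \eqref{1.4} falls just short of this (since $2 - 2^\pow < 2^{-\pow}$ for $p > 2$). I therefore expect the main obstacle to be a sufficiently sharp exclusion of sign-changing eigenfunctions in the window $[\lambda_1,\lambda_1^\infty)$, and I would focus the effort there, either by improving the nodal estimate for the nonautonomous problem so that one nodal component is controlled by the autonomous ground-state level $\lambda_1^\infty$ rather than by $\lambda_1$, or by exploiting \eqref{1.4} more carefully together with the precise value of $\mu$ furnished by \eqref{2.11}.
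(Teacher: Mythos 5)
Your counting argument reproduces the paper's proof almost verbatim: the bounds $t_n \ge (\lambda_1^\infty/\lambda_j)^\wop$ for $n \ge 2$ and $t_1 \ge (\lambda_1/\lambda_j)^\wop$ when $w^{(1)} \ne 0$, the mass constraint $\sum_n t_n = 1$, the conclusion $m \le 2$, the dichotomy at $\lambda^\#$, the identification $t_2^\pow\, \lambda_j = \lambda_1^\infty$, and the resulting formula \eqref{2.11} are exactly \eqref{2.12}--\eqref{2.15} of the paper. The one place you genuinely differ is that where the paper simply cites Cerami's survey for the fact that \eqref{1.3} has no solution on $\M$ at a level in $(\lambda_1^\infty, 2^\pow\, \lambda_1^\infty)$, you prove this yourself from Kwong's uniqueness theorem plus the nodal doubling estimate; that argument is correct and makes this step self-contained.

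The genuine gap is the final claim, the fixed sign of $\pi(w^{(1)})$, which you yourself flag as unresolved. Your difficulty comes from comparing both nodal components of a hypothetical sign-changing $\pi(w^{(1)})$ against $\lambda_1$, which only yields $\mu := J(\pi(w^{(1)})) \ge 2^\pow\, \lambda_1$, and, as you note, this need not exceed $\lambda_1^\infty$. The missing idea is to compare against the minimax level $\lambda_2$ instead of against $2^\pow\, \lambda_1$. If $v \in \M$ is a sign-changing critical point of $\restr{J}{\M}$ at level $\mu$, testing its Euler equation against $v^\pm$ gives $J(v^\pm) = \mu\, I(v^\pm)$, and since $v^+$ and $v^-$ have disjoint supports, for $(s,t) \ne (0,0)$,
\[
J(\pi(s v^+ + t v^-)) = \mu\, \frac{s^2 I(v^+) + t^2 I(v^-)}{\big(|s|^p I(v^+) + |t|^p I(v^-)\big)^{2/p}} \le \mu
\]
by the H\"older inequality and $I(v^+) + I(v^-) = 1$. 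The set $A = \set{\pi(s v^+ + t v^-) : (s,t) \ne (0,0)}$ is the image of $S^1$ under an odd continuous map, so $\gamma(A) \ge \gamma(S^1) = 2$, whence $\lambda_2 \le \sup_A J = \mu$: every sign-changing critical point of $\restr{J}{\M}$ lies at level at least $\lambda_2$. This is where \eqref{1.4} enters, exactly as your second proposed repair suspected, but through the chain already established in \eqref{2.10} rather than through any refinement of the two-sided nodal estimate: by \eqref{2.11} and \eqref{2.10}, $J(\pi(w^{(1)})) < \lambda_1^\infty < \lambda_2$, so $\pi(w^{(1)})$ cannot change sign. This is precisely the paper's one-line conclusion, with the genus comparison above again delegated to Cerami's survey.
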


\begin{proof}
Set $t_n = I(w^{(n)})$. Then $t_n \ge 0$ and
\begin{equation} \label{2.12}
\sum_{n=1}^m\, t_n = 1
\end{equation}
by \eqref{2.8}, so each $t_n \in [0,1]$. For $n \ge 2$, $t_n \ne 0$ and $\lambda_1^\infty\, t_n^{2/p} \le J^\infty(w^{(n)}) = \lambda_j\, t_n$ by \eqref{2.7}, so
\begin{equation} \label{2.13}
t_n \ge \left(\frac{\lambda_1^\infty}{\lambda_j}\right)^\wop, \quad n = 2,\dots,m.
\end{equation}
Combining \eqref{2.12} and \eqref{2.13} gives $(m - 1)^\pow\, \lambda_1^\infty \le \lambda_j < 2^\pow\, \lambda_1^\infty$, so $m \le 2$. If $t_1 = 0$, then $m = 2$ and $t_2 = 1$ by \eqref{2.12}, so $w^{(2)}$ is a solution of \eqref{1.3} on $\M$ with $\lambda = \lambda_j$ by \eqref{2.6}, which is a contradiction since $\lambda_1^\infty < \lambda_j < 2^\pow\, \lambda_1^\infty$ (see, e.g., Cerami \cite{MR2278729}). Hence $t_1 \ne 0$, and $\lambda_1\, t_1^{2/p} \le J(w^{(1)}) = \lambda_j\, t_1$ by \eqref{2.7}, so
\begin{equation} \label{2.14}
t_1 \ge \left(\frac{\lambda_1}{\lambda_j}\right)^\wop.
\end{equation}
Combining \eqref{2.12}--\eqref{2.14} now gives
\begin{equation} \label{2.15}
\Big[\lambda_1^\wop + (m - 1)\, (\lambda_1^\infty)^\wop\Big]^\pow \le \lambda_j.
\end{equation}

\ref{Lemma 2.3.i} If $\lambda_j < \lambda^\#$, then $m = 1$ by \eqref{2.15} and hence $t_1 = 1$ by \eqref{2.12}, so $u_k \to w^{(1)}$ by \eqref{2.9} and $w^{(1)}$ is a solution of \eqref{1.1} on $\M$ with $\lambda = \lambda_j$ by \eqref{2.6}.

\ref{Lemma 2.3.ii} If $u_k \not\to w^{(1)}$, then $m \ge 2$ by \eqref{2.9} and hence $m = 2$, and $w^{(1)} \ne 0$ since $t_1 \ne 0$. By \eqref{2.6}, $\pi(w^{(1)})$ and $\pi(w^{(2)})$ are solutions of \eqref{1.1} and \eqref{1.3} on $\M$ with $\lambda = t_1^\pow\, \lambda_j$ and $\lambda = t_2^\pow\, \lambda_j$, respectively. Since $J^\infty(\pi(w^{(2)})) = t_2^\pow\, \lambda_j < 2^\pow\, \lambda_1^\infty$, then $t_2^\pow\, \lambda_j = \lambda_1^\infty$, and combining this with $t_1 + t_2 = 1$, $J(\pi(w^{(1)})) = t_1^\pow\, \lambda_j$, and $\lambda^\# \le \lambda^j < 2^\pow\, \lambda_1^\infty$ gives \eqref{2.11}. Since $J(\pi(w^{(1)})) < \lambda_1^\infty < \lambda_2$ by \eqref{2.10}, $\pi(w^{(1)})$ has fixed sign (see, e.g., Cerami \cite{MR2278729}).
\end{proof}

Proof of the following lemma is similar to that of Clapp and Weth \cite[Lemma 8]{MR2103845} and is therefore omitted (see also Devillanova and Solimini \cite[Lemma 2.4]{MR1966256}).

\begin{lemma} \label{Lemma 2.4}
If $\lambda_1^\infty < \lambda_j = \lambda_{j+1} < 2^\pow\, \lambda_1^\infty$, then $\restr{J}{\M}$ has infinitely many critical points with value $\le \lambda_j$.
\end{lemma}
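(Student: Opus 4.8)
The plan is to run the classical Krasnoselskii genus multiplicity argument on the even functional $\restr{J}{\M}$, the only nonstandard point being the supply of compactness at the relevant level through Lemma~\ref{Lemma 2.3}. I would argue by contradiction: suppose $\restr{J}{\M}$ has only finitely many critical points with value $\le \lambda_j$. Writing $c := \lambda_j = \lambda_{j+1}$, the critical set $K$ of $\restr{J}{\M}$ at level $c$ is then a finite symmetric subset of $\M$; since $0 \notin \M$, such a set has genus at most one, and by the continuity and monotonicity of the genus there is a symmetric neighborhood $U$ of $K$ in $\M$ with $\gamma(\overline{U}) \le 1$ (with $U = \emptyset$ if $K = \emptyset$).

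The crux is an equivariant deformation: I want $\varepsilon > 0$ and an odd continuous map $\eta : \M \to \M$ with $\eta(\set{J \le c + \varepsilon} \setminus U) \subseteq \set{J \le c - \varepsilon}$, where $\set{J \le a} := \set{u \in \M : J(u) \le a}$. Granting this, I choose $A \in \A$ with $\gamma(A) \ge j + 1$ and $\sup_{A} J \le c + \varepsilon$, which exists because $\lambda_{j+1} = c$. Then $B := \overline{A \setminus U}$ has $\gamma(B) \ge \gamma(A) - \gamma(\overline{U}) \ge j$ by subadditivity of the genus, so $\eta(B) \in \A$ with $\gamma(\eta(B)) \ge \gamma(B) \ge j$ and $\sup_{\eta(B)} J \le c - \varepsilon$. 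This forces $\lambda_j \le c - \varepsilon < c = \lambda_j$, a contradiction; hence $\restr{J}{\M}$ must have infinitely many critical points with value $\le \lambda_j$.

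Constructing $\eta$ is where the lack of compactness enters and is the main obstacle. I would first fix $\varepsilon$ so small that $c$ is the only critical value in $[c - \varepsilon, c + \varepsilon]$, which is possible since there are only finitely many critical values. If $c < \lambda^\#$, then Lemma~\ref{Lemma 2.3}\ref{Lemma 2.3.i} supplies the full Palais--Smale condition at level $c$, and $\eta$ is obtained from the standard negative pseudo-gradient flow of $\restr{J}{\M}$ exactly as in the compact setting. The genuine difficulty is the range $c \ge \lambda^\#$, where $(PS)_c$ fails. There Lemma~\ref{Lemma 2.3}\ref{Lemma 2.3.ii} describes the failure precisely: any non-convergent critical sequence at level $c$ splits, through the profile decomposition, into a single ground-state bubble of the problem at infinity escaping to infinity with energy $\lambda_1^\infty$, together with a sign-definite solution $\pi(w^{(1)})$ at the strictly lower level $[\lambda_j^\wop - (\lambda_1^\infty)^\wop]^\pow \in [\lambda_1, \lambda_1^\infty)$. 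Since $c < 2^\pow\, \lambda_1^\infty$ at most one bubble can form, and the profile $\pi(w^{(1)})$ is one of the finitely many critical points, so the obstruction to the flow is confined to a finite union of finite-dimensional ``two-bump'' families indexed by these finitely many profiles and the bubble center in $\R^N$.

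I would then build $\eta$ in this bubbling regime along the lines of Clapp and Weth \cite[Lemma 8]{MR2103845} and Devillanova and Solimini \cite[Lemma 2.4]{MR1966256}. Away from $U$ and from small symmetric neighborhoods of the bubbling set the constrained gradient of $J$ is bounded below, so a pseudo-gradient flow lowers $J$ at a definite rate there; on the bubbling set one uses that the limiting two-bump configurations are approached only as the separation tends to infinity, so the flow can be steered to push $J$ strictly below $c$ while preserving oddness. I expect the delicate part to be the simultaneous bookkeeping that keeps the deformation equivariant, controls the genus lost when the bubbling set is excised, and guarantees a strict energy decrease across the two-bump region; once this deformation is in hand, the genus contradiction above completes the proof.
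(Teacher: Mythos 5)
Your skeleton --- contradiction hypothesis, excision of a genus-one neighborhood of the finite critical set, an odd deformation below the level, and monotonicity/subadditivity of the genus --- is the standard frame, and your treatment of the case $\lambda_j < \lambda^\#$ is fine, since Lemma \ref{Lemma 2.3}\ref{Lemma 2.3.i} gives the Palais--Smale condition at that level and the classical argument applies. But the paper states this lemma precisely for the regime where compactness fails, and there your proof stops exactly where the real proof must begin: you \emph{assert} that ``the flow can be steered to push $J$ strictly below $c$'' across the two-bump region, and you yourself list the equivariance, the genus bookkeeping, and the strict energy decrease as items you expect to be delicate. This cannot be deferred. Along the two-bump families $\pi\big(t_1^{1/p}w + t_2^{1/p}w_1^\infty(\cdot - y)\big)$ with $|y| \to \infty$, where $w$ is a fixed-sign critical point at the level $\big[\lambda_j^\wop - (\lambda_1^\infty)^\wop\big]^\pow$, the constrained gradient tends to $0$ while $J$ tends to $c = \lambda_j$; these are critical points at infinity, and a pseudo-gradient flow line entering such a region may decrease $J$ by arbitrarily little over arbitrarily long time. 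A uniform decrease through that region is in general false, so it is not something a deformation argument can take for granted.

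The alternative you hint at --- excising a neighborhood of the bubbling set as well --- wrecks your own genus count: starting from $\gamma(A) \ge j+1$ you can afford a total genus loss of $1$, which you already spend on $U$. The missing idea, and the reason the paper records in Lemma \ref{Lemma 2.3}\ref{Lemma 2.3.ii} that $\pi(w^{(1)})$ has \emph{fixed sign} (a fact you mention but never use), is that this makes the obstruction set small in the sense of genus: the two-bump configurations split into families labelled by the sign of their non-escaping component, and these families are uniformly separated (the $w$-part carries $L^p$-mass $t_1 \ge (\lambda_1/\lambda_j)^\wop$ bounded away from zero), so mapping a point of a small uniform neighborhood of the bubbling set to the sign of its $w$-component is an odd continuous map into $\{\pm 1\} \subset \R \setminus \{0\}$. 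Hence that neighborhood has genus $\le 1$, and its union with a disjoint neighborhood of the finite critical set --- a disjoint union of closed symmetric genus-one sets --- still has genus $\le 1$; only with this single combined excision does $\gamma(B) \ge \gamma(A) - 1 \ge j$ survive, while the gradient lower bound off the excised set follows from Lemma \ref{Lemma 2.3}, since any critical sequence in the strip would have to accumulate on it. The same observation repairs a second unjustified step: you fix $\varepsilon$ so that $c$ is the only critical value in $[c-\varepsilon, c+\varepsilon]$, but your hypothesis controls only critical points with value $\le c$, not those just above $c$; what is true is that critical points with values decreasing to $c$ form critical sequences and so must eventually enter the excised neighborhood. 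The paper omits the proof of this lemma by referring to Clapp--Weth \cite{MR2103845} and Devillanova--Solimini \cite{MR1966256}; what those references supply is exactly the content your sketch leaves out.
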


Theorems \ref{Theorem 1.1} and \ref{Theorem 1.2} will follow from the following proposition.

\begin{proposition} \label{Proposition 2.5}
If $N \ge 3$, $V \in L^\infty(\R^N)$ satisfies \eqref{1.2}, $p \in (2,2^\ast)$, and
\[
0 < \lambda_1 < \lambda_1^\infty < \lambda_2 \le \cdots \le \lambda_N < 2^\pow\, \lambda_1^\infty,
\]
then \eqref{1.1} has $N - 1$ pairs of eigenfunctions on $\M$. If, in addition, \eqref{1.1} has no positive eigenfunctions on $\M$ corresponding to eigenvalues in $(\lambda_1,\lambda_1^\infty)$, and only a finite number of eigenfunctions on $\M$ corresponding to eigenvalues in $(\lambda_1,2^\pow\, \lambda_1^\infty)$, then at least $N - 1$ of the minimax levels $\lambda_1,\dots,\lambda_N$ are eigenvalues of \eqref{1.1}.
\end{proposition}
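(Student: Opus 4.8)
The plan is to deduce Proposition \ref{Proposition 2.5} from a standard minimax/deformation argument on the symmetric manifold $\M$, using Lemma \ref{Lemma 2.3} and Lemma \ref{Lemma 2.4} to upgrade the minimax values $\lambda_1,\dots,\lambda_N$ into genuine critical values wherever the Palais--Smale condition can be recovered. The key structural fact is \eqref{2.10}, which places $\lambda_2,\dots,\lambda_N$ strictly inside the interval $(\lambda_1^\infty,2^\pow\,\lambda_1^\infty)$ where Lemma \ref{Lemma 2.3} applies, while $\lambda_1<\lambda_1^\infty$ is attained by the ground state. Since $\restr{J}{\M}$ is even and the $\lambda_j$ are defined by a genus-based minimax, the classical multiplicity theory (the Krasnoselskii genus together with a quantitative deformation lemma) will produce a distinct pair of critical points at each level where compactness holds, and a whole continuum of them by Lemma \ref{Lemma 2.4} whenever two consecutive levels coincide.

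First I would record that under the hypothesis $\lambda_1<\lambda_1^\infty$ the ground state is attained, giving one pair of eigenfunctions at level $\lambda_1$. For the remaining levels I would argue by the following dichotomy at each $j\in\{2,\dots,N\}$. If $\lambda_j<\lambda^\#$, then every critical sequence at level $\lambda_j$ converges strongly by Lemma \ref{Lemma 2.3}\ref{Lemma 2.3.i}, so $\restr{J}{\M}$ satisfies $(PS)_{\lambda_j}$ and the standard genus argument yields a critical point (and, by evenness, a pair) at that level. If $\lambda_j\ge\lambda^\#$, then compactness can fail only through the splitting described in Lemma \ref{Lemma 2.3}\ref{Lemma 2.3.ii}; the crucial point is that the limiting profile $\pi(w^{(1)})$ then has \emph{fixed sign} and value below $\lambda_1^\infty<\lambda_2$, so noncompact critical sequences cannot obstruct the minimax levels $\lambda_2,\dots,\lambda_N$. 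I would make this precise by showing that if $\lambda_j$ is \emph{not} a critical value, then the associated critical sequences all split off a positive (or negative) ground-state bubble, contradicting either the sign restriction or, via a deformation argument, the minimax characterization of $\lambda_j$. The counting then gives $N-1$ pairs in total.

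For the second, sharper assertion I would add the two extra hypotheses: no positive eigenfunctions with eigenvalue in $(\lambda_1,\lambda_1^\infty)$, and only finitely many eigenfunctions with eigenvalue in $(\lambda_1,2^\pow\,\lambda_1^\infty)$. The finiteness assumption rules out the coincidence case of Lemma \ref{Lemma 2.4}: if any two of $\lambda_2,\dots,\lambda_N$ were equal and critical, there would be infinitely many critical points at that level, a contradiction. Hence I may assume the levels in $(\lambda_1^\infty,2^\pow\,\lambda_1^\infty)$ are either strictly ordered or noncritical. The no-positive-eigenfunction hypothesis is what forbids the fixed-sign profile $\pi(w^{(1)})$ produced in Lemma \ref{Lemma 2.3}\ref{Lemma 2.3.ii}, because \eqref{2.11} places $J(\pi(w^{(1)}))$ in $[\lambda_1,\lambda_1^\infty)$ and the profile has fixed sign; such an eigenfunction is excluded unless its value is exactly $\lambda_1$. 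This closes off the only mechanism for noncompactness at each $\lambda_j$, forcing $(PS)_{\lambda_j}$ to hold. A standard deformation/genus bookkeeping argument then shows that at most one of the $N$ levels $\lambda_1,\dots,\lambda_N$ can fail to be critical, so at least $N-1$ of them are eigenvalues.

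The main obstacle I anticipate is handling the regime $\lambda_j\ge\lambda^\#$ cleanly, i.e.\ converting the profile decomposition of Lemma \ref{Lemma 2.3}\ref{Lemma 2.3.ii} into an honest recovery of the minimax value. The delicate point is that a critical sequence at $\lambda_j$ may genuinely fail to converge, splitting into a fixed-sign bulk profile plus a ground-state bubble at infinity; I must argue that this escape of mass either is excluded by the sign hypothesis or occurs only at levels distinct from the $\lambda_j$. Making the deformation argument compatible with this partial loss of compactness—so that the genus of sublevel sets still drops by the right amount across each critical level—is where the real care is needed, and it is also where the counting that yields exactly $N-1$ (rather than fewer) pairs is secured.
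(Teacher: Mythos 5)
Your handling of the second statement follows the paper's route fairly closely, but your first part contains a genuine gap: you claim that when $\lambda_j \ge \lambda^\#$ the splitting of Lemma \ref{Lemma 2.3}\ref{Lemma 2.3.ii} leads to a contradiction (``contradicting either the sign restriction or, via a deformation argument, the minimax characterization of $\lambda_j$''), so that each $\lambda_j$, $j \ge 2$, is itself a critical value. This is not provable under the hypotheses of the first statement, and no contradiction is available: a critical sequence at level $\lambda_j$ can genuinely split into a fixed-sign profile $\pi(w^{(1)})$ with $J(\pi(w^{(1)})) = \widetilde{\lambda}_j := \big[\lambda_j^\wop - (\lambda_1^\infty)^\wop\big]^\pow \in [\lambda_1,\lambda_1^\infty)$ plus a ground-state bubble of the limit problem \eqref{1.3} escaping to infinity. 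The first statement imposes \emph{no} restriction on positive eigenfunctions with eigenvalues in $(\lambda_1,\lambda_1^\infty)$, so $\pi(w^{(1)})$ is a perfectly admissible critical point and nothing is violated; and the minimax characterization of $\lambda_j$ only guarantees the \emph{existence} of a critical sequence at that level, never the convergence of every (or any) such sequence, so no deformation argument can rule the splitting out. Note also that if your claim were correct, the first statement would already show that all of $\lambda_2,\dots,\lambda_N$ are critical values, rendering the additional hypotheses of the second statement pointless --- a strong indication that this step cannot work.

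The paper's proof instead \emph{embraces} the possible loss of compactness and counts what it produces. After invoking Lemma \ref{Lemma 2.4} to assume $\lambda_2 < \cdots < \lambda_N$ (as you also do), Lemma \ref{Lemma 2.3} gives, for each $j \in \{2,\dots,N\}$, the dichotomy: either $\lambda_j$ is an eigenvalue, or $\widetilde{\lambda}_j < \lambda_1^\infty$ is an eigenvalue with a fixed-sign eigenfunction on $\M$. Since $\lambda \mapsto \big[\lambda^\wop - (\lambda_1^\infty)^\wop\big]^\pow$ is strictly increasing, the $2(N-1)$ numbers $\widetilde{\lambda}_2 < \cdots < \widetilde{\lambda}_N < \lambda_2 < \cdots < \lambda_N$ are pairwise distinct, and picking from each pair $\{\widetilde{\lambda}_j,\lambda_j\}$ a member that is an eigenvalue yields $N-1$ distinct eigenvalues, hence $N-1$ pairs of eigenfunctions --- with no need to show that any particular $\lambda_j$ is critical. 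The same injectivity is also what completes your second part correctly: you attribute ``at most one of the $N$ levels can fail to be critical'' to a deformation/genus bookkeeping argument, but the actual reason is arithmetic. Under the sign hypothesis, a level $\lambda_j$ that fails to be critical forces $\widetilde{\lambda}_j = \lambda_1$ (since $\widetilde{\lambda}_j \in [\lambda_1,\lambda_1^\infty)$ would otherwise be a forbidden eigenvalue with a positive eigenfunction), and because the $\widetilde{\lambda}_j$ are pairwise distinct this can occur for at most one $j$; together with the attained level $\lambda_1 < \lambda_1^\infty$ this gives at least $N-1$ critical levels among $\lambda_1,\dots,\lambda_N$.
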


\begin{proof}
We may assume that $\lambda_2 < \cdots < \lambda_N$ in view of Lemma \ref{Lemma 2.4}. For each $j \in \set{2,\dots,N}$, either $\lambda_j > \lambda_1^\infty$ is an eigenvalue, or
\begin{equation} \label{2.16}
\widetilde{\lambda}_j := \Big[\lambda_j^\wop - (\lambda_1^\infty)^\wop\Big]^\pow < \lambda_1^\infty
\end{equation}
is an eigenvalue with a positive eigenfunction on $\M$ by Lemma \ref{Lemma 2.3}. It follows that at least $N - 1$ of the levels $\widetilde{\lambda}_2 < \cdots < \widetilde{\lambda}_N < \lambda_2 < \cdots < \lambda_N$ are eigenvalues. If $\lambda_1$ is the only eigenvalue $< \lambda_1^\infty$ with a positive eigenfunction on $\M$, then any $\widetilde{\lambda}_j \ne \lambda_1$ is not an eigenvalue by \eqref{2.16}.
\end{proof}

\section{Proofs of Theorems \ref{Theorem 1.1}--\ref{Theorem 1.3}}

In view of Proposition \ref{Proposition 2.5} and \eqref{2.10}, to complete the proofs of Theorems \ref{Theorem 1.1} and \ref{Theorem 1.2}, it only remains to show that $\lambda_1 < \lambda_1^\infty$ and $\lambda_N < 2^\pow\, \lambda_1^\infty$ when \eqref{1.5} holds. We have
\[
\lambda_1 \le J(w_1^\infty) = J^\infty(w_1^\infty) - \int_{\R^N} W(x)\, w_1^\infty(x)^2\, dx \le \lambda_1^\infty - c_0 \int_{\R^N} e^{- a\, |x|}\, w_1^\infty(x)^2\, dx < \lambda_1^\infty.
\]
We will show that there exists an $R > 0$ such that, for the odd continuous map $h$ from the unit sphere $S^{N-1} \subset \R^N$ to $\M$ defined by
\[
h(y) = \frac{w_1^\infty(\cdot + Ry) - w_1^\infty(\cdot - Ry)}{\pnorm{w_1^\infty(\cdot + Ry) - w_1^\infty(\cdot - Ry)}}, \quad y \in S^{N-1},
\]
we have
\begin{equation} \label{3.1}
\sup_{u \in h(S^{N-1})}\, J(u) < 2^\pow\, \lambda_1^\infty.
\end{equation}
Since $\gamma(h(S^{N-1})) \ge \gamma(S^{N-1}) = N$, then
\begin{equation} \label{3.2}
\lambda_N < 2^\pow\, \lambda_1^\infty.
\end{equation}

\newpage

First we prove an elementary inequality.

\begin{lemma} \label{Lemma 3.1}
For all $a, b \in \R$ and $p \ge 2$,
\begin{equation} \label{3.3}
|a + b|^p \ge |a|^p + |b|^p - p\, |a|^{p-1}\, |b| - p\, |a|\, |b|^{p-1}.
\end{equation}
\end{lemma}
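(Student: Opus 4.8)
The goal is to establish the pointwise inequality
\[
|a + b|^p \ge |a|^p + |b|^p - p\, |a|^{p-1}\, |b| - p\, |a|\, |b|^{p-1}
\]
for all real $a, b$ and all $p \ge 2$. My plan is to reduce to a one-variable calculus problem by homogeneity and then handle the resulting inequality by examining the function $t \mapsto |1 + t|^p$ and its Taylor-type lower bound.

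First I would observe that both sides are homogeneous of degree $p$ in $(a,b)$ and symmetric under the simultaneous sign change $(a,b) \mapsto (-a,-b)$, and moreover the right-hand side depends only on $|a|$ and $|b|$. So I would first dispose of the trivial cases $a = 0$ or $b = 0$ (where the inequality holds with equality or as a strict consequence of the nonnegativity of the dropped terms), and then, assuming $a \ne 0$, divide through by $|a|^p$ and set $t = b/a$. After relabelling it suffices to prove
\[
|1 + t|^p \ge 1 + |t|^p - p\, |t| - p\, |t|^{p-1} \quad \forall t \in \R.
\]
Define $f(t) = |1 + t|^p - 1 - |t|^p + p\, |t| + p\, |t|^{p-1}$; the task is to show $f \ge 0$ on $\R$.

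The cleanest route is to split the real line at the breakpoints $t = 0$ and $t = -1$ where the absolute values change behaviour, so that on each of the intervals $(-\infty,-1)$, $(-1,0)$, and $(0,\infty)$ the function $f$ is smooth, and then to prove $f \ge 0$ on each piece via convexity or an elementary derivative estimate. On the region $t \ge 0$ the signs are all favourable — one can use the standard convexity inequality $(1+t)^p \ge 1 + t^p$ for $p \ge 2$ together with the nonnegativity of the added terms $p\,t + p\,t^{p-1}$ — so that case is immediate. The delicate region is where $a$ and $b$ have opposite signs, i.e. $t < 0$, since there the leading term $|1+t|^p$ can be small (it vanishes at $t = -1$) while $|t|^p$ is large, so the subtracted cross terms $-p\,|t| - p\,|t|^{p-1}$ must be shown to compensate. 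The main obstacle will therefore be verifying the inequality on $(-\infty,-1)$ and $(-1,0)$; I expect to handle this by an auxiliary substitution (for instance $s = -t > 0$, reducing to a comparison of $|1-s|^p$ against $1 + s^p - p\,s - p\,s^{p-1}$) and then applying the mean value theorem to the convex function $x \mapsto x^p$, which yields bounds of the form $|x^p - y^p| \le p\,\max(x,y)^{p-1}\,|x-y|$ that exactly match the cross terms on the right.

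An appealing alternative that avoids casework is to prove the stronger smooth inequality $|a+b|^p \ge |a|^p + |b|^p - p\,|a|^{p-1}|b| - p\,|a|\,|b|^{p-1}$ by fixing $|a|$ and $|b|$ and noting that the left-hand side is minimized, over the sign of the product $ab$, when $a$ and $b$ have opposite signs, so it suffices to treat $|a+b| = \big||a| - |b|\big|$. Writing $u = |a| \ge 0$, $v = |b| \ge 0$ and assuming without loss of generality $u \ge v$, the claim becomes $(u - v)^p \ge u^p + v^p - p\,u^{p-1} v - p\,u\,v^{p-1}$, which I would prove by applying the mean value theorem to $g(x) = x^p$ on $[u-v, u]$ to get $u^p - (u-v)^p = p\,\xi^{p-1} v$ for some $\xi \in (u-v, u)$, hence $u^p - (u-v)^p \le p\,u^{p-1} v$, and then simply discarding the nonnegative remaining term $p\,u\,v^{p-1}$ on the right. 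This is the shortest path, and I would present it as the proof, keeping the careful step the reduction to opposite signs and the single application of the mean value theorem.
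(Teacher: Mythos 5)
Your reduction is sound: by the reverse triangle inequality $|a+b| \ge \big|\,|a|-|b|\,\big|$ and the fact that the right-hand side of \eqref{3.3} depends only on $|a|$ and $|b|$ (and is symmetric in them), it suffices to prove
\begin{equation*}
(u-v)^p \;\ge\; u^p + v^p - p\, u^{p-1} v - p\, u\, v^{p-1}, \qquad 0 \le v \le u,
\end{equation*}
and your mean value theorem step correctly gives $(u-v)^p \ge u^p - p\, u^{p-1} v$. But the closing move, as literally written, does not close the argument. ``Discarding the nonnegative remaining term $p\, u\, v^{p-1}$'' from the right-hand side means you must now prove the \emph{stronger} inequality $(u-v)^p \ge u^p + v^p - p\, u^{p-1} v$, and this exceeds what your MVT bound delivers by exactly $v^p$ --- a term your argument never addresses. (That stronger inequality does happen to be true, and is essentially what the paper proves, but it does not follow from the MVT estimate alone.)

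The repair is one line: since $0 \le v \le u$ and $p \ge 2$, we have $v^p = v \cdot v^{p-1} \le u\, v^{p-1} \le p\, u\, v^{p-1}$, so the two remaining terms satisfy $v^p - p\, u\, v^{p-1} \le 0$; hence the right-hand side is at most $u^p - p\, u^{p-1} v$, which is $\le (u-v)^p$ by your MVT step. With that sentence added, your proof is complete, and it is a genuinely different route from the paper's: the paper assumes opposite signs with $|b| \le |a|$, divides by $|a|^p$ to reduce to $(1-x)^p \ge 1 + x^p - px - p x^{p-1}$ on $[0,1]$, and then proves the stronger inequality $(1-x)^p \ge 1 + x^p - px$ (never using the fourth term at all) by checking that $f(x) = (1-x)^p - 1 - x^p + px$ satisfies $f(0)=0$ and $f'(x) = p\left[1 - (1-x)^{p-1} - x^{p-1}\right] \ge 0$. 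Your argument, by contrast, makes essential use of the term $p\,|a|\,|b|^{p-1}$ to absorb $|b|^p$. Both are elementary and short; the paper's yields a slightly stronger statement, while yours trades that for a single application of the mean value theorem in place of the monotonicity argument.
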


\begin{proof}
The inequality is clearly true if $a$ or $b$ is zero, or if $a$ and $b$ have the same sign, so suppose that $0 < |b| \le |a|$ and that $a$ and $b$ have opposite signs. Then \eqref{3.3} is equivalent to
\begin{equation} \label{3.4}
(1 - x)^p \ge 1 + x^p - p\, x - p\, x^{p-1} \quad \forall x \in [0,1],
\end{equation}
where $x = |b/a|$. Let $f(x) = (1 - x)^p - 1 - x^p + p\, x,\, x \in [0,1]$. Then $f(0) = 0$, and $f'(x) = p \left[1 - (1 - x)^{p-1} - x^{p-1}\right] \ge p \left[1 - (1 - x) - x\right] = 0$ since $p - 1 \ge 1$, so $f(x) \ge 0$, from which \eqref{3.4} follows.
\end{proof}

Recall that
\begin{equation} \label{3.5}
w_1^\infty(x) \sim C_0\, \frac{e^{- \sqrt{V^\infty}\, |x|}}{|x|^{(N-1)/2}} \text{ as } |x| \to \infty
\end{equation}
for some constant $C_0 > 0$ (see Gidas et al. \cite{MR544879}).

\begin{lemma} \label{Lemma 3.2}
Let $a < b < 2\, \sqrt{V^\infty}$. Then as $R \to \infty$, uniformly in $y \in S^{N-1}$,
\begin{enumroman}
\item \label{Lemma 3.2.i} $\dint_{\R^N} w_1^\infty(x + Ry)^{q-1}\, w_1^\infty(x - Ry)\, dx = \O(e^{- bR}) \quad \forall q \ge 2$,
\item \label{Lemma 3.2.ii} $J(w_1^\infty(\cdot + Ry) - w_1^\infty(\cdot - Ry)) \le 2 \lambda_1^\infty - \dint_{\R^N} W(x)\, w_1^\infty(x + Ry)^2\, dx + \O(e^{- bR})$,
\item \label{Lemma 3.2.iii} $\pnorm{w_1^\infty(\cdot + Ry) - w_1^\infty(\cdot - Ry)} \ge 2^{1/p} + \O(e^{- bR})$.
\end{enumroman}
\end{lemma}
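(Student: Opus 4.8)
The plan is to treat (i) as the engine of the whole lemma: once the interaction estimate in (i) is available, parts (ii) and (iii) follow by algebraic expansion together with the two elementary facts that $W \ge 0$ (by \eqref{1.5}) and $W \in L^\infty(\R^N)$ (since $V \in L^\infty$). Throughout I would write $u = w_1^\infty(\cdot + Ry)$ and $v = w_1^\infty(\cdot - Ry)$; both lie on $\M$ with $J^\infty(u) = J^\infty(v) = \lambda_1^\infty$ by the translation invariance of $J^\infty$.

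For (i), first I would observe that the integral is in fact \emph{independent} of $y$: substituting $x = \rho z$ for a rotation $\rho \in O(N)$ and using the radial symmetry of $w_1^\infty$ shows that its value at $y$ equals its value at $\rho^{-1} y$, and $O(N)$ acts transitively on $S^{N-1}$. This reduces (i) to the case $y = e_1$ and makes the uniformity automatic. From the asymptotics \eqref{3.5} one gets a global pointwise bound $w_1^\infty(x) \le C\, e^{- b'\, |x|}$ for any fixed $b' \in (0, \sqrt{V^\infty})$, the polynomial factor being absorbed for large $|x|$ and $w_1^\infty$ being bounded on compacts. The core estimate is then a splitting of the exponent: since $q - 1 \ge 1$, for any $\eta \in (0,1)$,
\[
(q-1)\, |x + Re_1| + |x - Re_1| \ge |x + Re_1| + |x - Re_1| \ge 2(1 - \eta)\, R + 2\eta\, |x|,
\]
using $|x + Re_1| + |x - Re_1| \ge 2R$ (sum of distances to the two foci $\pm Re_1$) for one part and $|x + Re_1| + |x - Re_1| \ge 2\, |x|$ for the other. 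Integrating $e^{- b'[(q-1)|x+Re_1| + |x-Re_1|]}$ then yields a bound $\O(e^{- 2b'(1-\eta)\, R})$. Since $b < 2\, \sqrt{V^\infty}$, choosing $b' < \sqrt{V^\infty}$ and $\eta > 0$ so that $2b'(1-\eta) > b$ gives (i). This exponent-splitting step, designed to recover a decay rate arbitrarily close to $2\, \sqrt{V^\infty}$ rather than the naive $\sqrt{V^\infty}$, is the main obstacle.

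For (ii), I would expand $J(u - v) = J^\infty(u - v) - \dint_{\R^N} W(x)\, (u - v)^2$ and write $J^\infty(u - v) = 2\lambda_1^\infty - 2 \dint_{\R^N} \nabla u \cdot \nabla v + V^\infty\, uv$. Integrating by parts and using that $u$ solves the translated equation \eqref{1.3} shows the cross term equals $\lambda_1^\infty \dint_{\R^N} u^{p-1}\, v$, which is $\O(e^{-bR})$ by (i) with $q = p$; hence $J^\infty(u - v) = 2\lambda_1^\infty + \O(e^{-bR})$. Expanding $(u-v)^2 = u^2 - 2uv + v^2$, the term $- \dint_{\R^N} W\, v^2 \le 0$ may be discarded, while $2 \dint_{\R^N} W\, uv \le 2\, \pnorm[\infty]{W} \dint_{\R^N} uv = \O(e^{-bR})$ by (i) with $q = 2$. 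Collecting these gives (ii), and since the error terms are exactly the $y$-independent interaction integrals of (i), the estimate is uniform in $y$.

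For (iii), apply Lemma \ref{Lemma 3.1} pointwise with arguments $u$ and $-v$: since $u, v > 0$,
\[
|u - v|^p \ge u^p + v^p - p\, u^{p-1}\, v - p\, u\, v^{p-1}.
\]
Integrating and using $\dint_{\R^N} u^p = \dint_{\R^N} v^p = 1$ (as $u, v \in \M$) together with (i) (with $q = p$, applied to both cross integrals) gives $\pnorm{u - v}^p \ge 2 + \O(e^{-bR})$. Taking $p$-th roots and using $(2 + t)^{1/p} = 2^{1/p} + \O(t)$ as $t \to 0$ then yields (iii).
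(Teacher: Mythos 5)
Your proposal is correct and follows essentially the same route as the paper's proof: part $(i)$ via a global pointwise exponential bound on $w_1^\infty$ from \eqref{3.5} and an exponent-splitting estimate, part $(ii)$ by expanding $J$, using that $w_1^\infty(\cdot + Ry)$ solves \eqref{1.3} with $\lambda = \lambda_1^\infty$ together with $W \ge 0$ and $W \in L^\infty$, and part $(iii)$ by integrating Lemma \ref{Lemma 3.1} and invoking $(i)$. The only cosmetic differences are your rotation-invariance reduction to $y = e_1$ and the convex-combination splitting $|x + Re_1| + |x - Re_1| \ge 2(1-\eta)R + 2\eta\,|x|$, which plays exactly the role of the paper's bound $\sqrt{V^\infty}\,|x - 2Ry| \ge \tfrac{b}{2}\,(2R - |x|)$ after its change of variables.
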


\begin{proof}
\ref{Lemma 3.2.i} Making the change of variable $x \mapsto x + Ry$ gives
\[
\int_{\R^N} w_1^\infty(x + Ry)^{q-1}\, w_1^\infty(x - Ry)\, dx = \int_{\R^N} w_1^\infty(x)^{q-1}\, w_1^\infty(x - 2Ry)\, dx.
\]
By \eqref{3.5}, $w_1^\infty(x) \le C\, e^{- \sqrt{V^\infty}\, |x|}$ for some $C > 0$, so the integral on the right is bounded by a constant multiple of
\[
\int_{\R^N} e^{- \sqrt{V^\infty}\, [(q - 1)\, |x| + |x - 2Ry|]}\, dx \le \int_{\R^N} e^{- \sqrt{V^\infty}\, |x| - b\, (2R - |x|)/2}\, dx = e^{- bR} \int_{\R^N} e^{- (\sqrt{V^\infty} - b/2)\, |x|}\, dx,
\]
and the last integral is finite since $b < 2\, \sqrt{V^\infty}$.

\ref{Lemma 3.2.ii} We have
\begin{align*}
& \hquad J(w_1^\infty(\cdot + Ry) - w_1^\infty(\cdot - Ry))\\[7.5pt]
= & \hquad J^\infty(w_1(\cdot + Ry)) + J^\infty(w_1^\infty(\cdot - Ry)) - \int_{\R^N} W(x)\, \big(w_1^\infty(x + Ry) - w_1^\infty(x - Ry)\big)^2\, dx\\[5pt]
& \hquad - 2 \int_{\R^N} \big(\nabla w_1^\infty(x + Ry) \cdot \nabla w_1^\infty(x - Ry) + V^\infty\, w_1^\infty(x + Ry)\, w_1^\infty(x - Ry)\big)\, dx\\[5pt]
\le & \hquad 2 \lambda_1^\infty - \int_{\R^N} W(x)\, w_1^\infty(x + Ry)^2\, dx + 2 \pnorm[\infty]{W} \int_{\R^N} w_1^\infty(x + Ry)\, w_1^\infty(x - Ry)\, dx\\[5pt]
& \hquad - 2 \lambda_1^\infty \int_{\R^N} w_1^\infty(x + Ry)^{p-1}\, w_1^\infty(x - Ry)\, dx
\end{align*}
since $w_1^\infty(\cdot + Ry)$ solves \eqref{1.3} with $\lambda = \lambda_1^\infty$, and the last two terms are of the order $\O(e^{- bR})$ by part \ref{Lemma 3.2.i}.

\ref{Lemma 3.2.iii} By Lemma \ref{Lemma 3.1},
\begin{align*}
& \hquad \pnorm{w_1^\infty(\cdot + Ry) - w_1^\infty(\cdot - Ry)}\\[7.5pt]
\ge & \hquad \bigg(\pnorm{w_1^\infty(\cdot + Ry)}^p + \pnorm{w_1^\infty(\cdot - Ry)}^p - p \int_{\R^N} w_1^\infty(x + Ry)^{p-1}\, w_1^\infty(x - Ry)\, dx\\[5pt]
& \hquad - p \int_{\R^N} w_1^\infty(x - Ry)^{p-1}\, w_1^\infty(x + Ry)\, dx\bigg)^{1/p}\\[7.5pt]
= & \hquad \big(2 + \O(e^{- bR})\big)^{1/p}
\end{align*}
by part \ref{Lemma 3.2.i}, and the conclusion follows.
\end{proof}

We are now ready to prove \eqref{3.1}. By \eqref{1.5},
\[
\int_{\R^N} W(x)\, w_1^\infty(x + Ry)^2\, dx = \int_{\R^N} W(x - Ry)\, w_1^\infty(x)^2\, dx \ge c\, e^{- aR} \quad \forall R > 0, y \in S^{N-1}
\]
for some $c > 0$. This together with Lemma \ref{Lemma 3.2} gives
\begin{align*}
\sup_{y \in S^{N-1}}\, J(h(y)) = & \hquad \sup_{y \in S^{N-1}}\, \frac{J(w_1^\infty(\cdot + Ry) - w_1^\infty(\cdot - Ry))}{\pnorm{w_1^\infty(\cdot + Ry) - w_1^\infty(\cdot - Ry)}^2}\\[10pt]
\le & \hquad \frac{2 \lambda_1^\infty - c\, e^{- aR}}{2^{2/p}} + \O(e^{- bR})\\[10pt]
< & \hquad 2^\pow\, \lambda_1^\infty
\end{align*}
if $R$ is sufficiently large, since $a < b$.

Since $\lambda_{1,\, r} < \lambda_2$ by \eqref{2.10}, to complete the proof of Theorem \ref{Theorem 1.3}, it only remains to show that $\lambda_N < \lambda_{2,\, r}$ when \eqref{1.8} holds. By \eqref{2.4}, $\lambda_{2,\, r}^\infty - \lambda_{2,\, r} \le \pnorm[\wop]{W}$, and combining this with \eqref{3.2}, \eqref{1.6}, and \eqref{1.8} gives
\[
\lambda_N < \lambda_2^\infty \le \lambda_{2,\, r}^\infty - \pnorm[\wop]{W} \le \lambda_{2,\, r}.
\]

\def\cdprime{$''$}

\end{document}